\numberwithin{equation}{section}
\newtheorem{theorem}{Theorem}[section]
\newtheorem{lemma}{Lemma}[section]
\newenvironment{proof}[1][Proof]{\begin{trivlist}
\item[\hskip \labelsep {\bfseries #1}]}{\end{trivlist}}
\newenvironment{remark}[1][Remark]{\begin{trivlist}
\item[\hskip \labelsep {\bfseries #1}]}{\end{trivlist}}
\newcommand{\qed}{\nobreak \ifvmode \relax \else
      \ifdim\lastskip<1.5em \hskip-\lastskip
      \hskip1.5em plus0em minus0.5em \fi \nobreak
      \vrule height0.75em width0.5em depth0.25em\fi}
\renewcommand{\Re}{\operatorname{Re}}
\providecommand{\keywords}[1]
{
  \small	
  \textbf{\textit{Keywords: }} #1
}
\providecommand{\MSC}[1]
{
  \small	
  \textbf{\textit{Mathematics Subject Classification 2020: }} #1
}
\begin{document}
\title{A Family of Probability Distributions Consistent with the DOZZ Formula: Towards a Conjecture for the Law of 2D GMC} 

\author{Dmitry Ostrovsky}

\date{}

\maketitle
\noindent

\begin{abstract}
\noindent
A three parameter family of probability distributions is constructed such that its Mellin transform is defined over the same
domain as the 2D GMC on the Riemann sphere with three insertion points $(\alpha_1,\alpha_2,\alpha_3)$ and satisfies the DOZZ
formula in the sense of Kupiainen \emph{et al}. ({\it Ann. Math.} {\bf 191} (2020) 81 -- 166). The probability distributions in the family 
are defined as products of independent Fyodorov-Bouchaud and powers of Barnes beta distributions of types $(2, 1)$ and $(2, 2).$ 
In the special case of $\alpha_1+\alpha_2+\alpha_3=2Q$ the constructed probability distribution is shown to be consistent with the known small deviation asymptotic of the 2D GMC laws with everywhere positive curvature.
\end{abstract}

{\keywords{Gaussian Multiplicative Chaos, DOZZ formula, analytic continuation, Barnes beta probability distributions, Mellin transform, double gamma function, infinite divisibility.}}

{\MSC{60E07, 	60E10, 60D99 (primary), 81T40, 81T20 (secondary).}}

\section{Introduction}
In this paper we contribute to the study of integrability of 2D GMC (Gaussian Multiplicative Chaos) measures on the Riemann sphere. The study of GMC measures, cf. \cite{K2}, \cite{RVrevis}, is a flourishing area of research at the intersection of probability \cite{B}, 
\cite{JS}, \cite{RV2}, \cite{Shamov},  statistical physics of random energy landscapes \cite{Cao2}, \cite{FyoBou}, \cite{FLD}, \cite{FLDR}, and Liouville conformal field theory \cite{Jones}, \cite{BenSch}, \cite{David}, \cite{DS}, \cite{KRV}, \cite{RV1}. 

A fundamental open problem in the theory of GMC is to calculate the distribution of the total mass of the chaos measure. This field
was pioneered by \cite{FyoBou}, \cite{FLDR} and independently by \cite{Me3}, \cite{Me4}, \cite{Me16}, who made precise conjectures
about the Mellin transform of the total mass distribution with and without insertion points on simple 1D shapes such as circle or interval. These conjectures were responsible for the continued development of the field and led to many applications in statistical physics, cf.  \cite{FLD}, and, on the mathematical side, these conjectures led to the creation of the theory of Barnes beta distributions, cf. \cite{MeIMRN}, \cite{Me13}, \cite{Me14}. We refer the interested reader to \cite{Me18} for a comprehensive review of all of these developments. 

The conjectures about the total mass in 1D were facilitated by the knowledge of the integer moments of the GMC chaos measure in question,
which were represented by the Selberg integral on the interval and the Morris integral on the circle, cf. \cite{ForresterBook}, chapter 4. The task of formulating a conjecture
was then tantamount to analytically continuing the corresponding integral as a function of its dimension, \emph{i.e.} the order of the integer moment of the measure, to the complex plane, thereby conjecturing the Mellin transform of the total mass. Such a procedure does not guarantee uniqueness and is not mathematically rigorous, especially as the positive integer moments of a GMC measure become infinite at a sufficiently high order. As a result, the problem of rigorously computing the law of the total mass remained out of reach until 2018. A breakthrough was made in \cite{David}, where the connection between GMC and Liouville conformal field theory was established. This connection 
along with the machinery developed in \cite{KRVlocal} and \cite{KRV} led
to the proofs of: the conjecture of \cite{FyoBou} about the law of the GMC on the circle without insertion points in \cite{Remy}, 
the conjectures of \cite{FLDR} and \cite{Me4}, \cite{MeIMRN} about the law the GMC on the interval with two insertion points in \cite{RemyZhu}, and, most recently, the conjecture of \cite{Me16} about the law of GMC on the circle with a single insertion point in
\cite{RemyZhu2}, see also \cite{Nuj} and \cite{NujCh} for a different approach based on orthogonal polynomials. It needs to be emphasized that these proofs 
were the proofs of uniqueness, which was lacking in the analytic continuation approach, and so required the conjectures as an essential starting point.


The study of integrability of 2D GMC measures goes back to the pioneering work of \cite{Dorn} and \cite{ZZ}, which conjectured
3-point correlation functions of the Liouville conformal field theory that is known as the celebrated DOZZ formula, cf.  \cite{Nak} and \cite{Rib}  for review. Their computations were based on the complex Selberg integral that was evaluated independently in \cite{Aom} and \cite{DF}, cf.
\cite{ForresterWarnaar} for review. The DOZZ formula remained without a rigorous mathematical footing until the work of \cite{David} and \cite{KRV}, which established the connection between the GMC and Liouville theories and formulated and proved the DOZZ formula, respectively. In the modern mathematical language, the DOZZ formula gives the value
of the Mellin transform of the GMC measure with three insertion points $(\alpha_1, \alpha_2, \alpha_3)$ on the Riemann sphere at the point
\begin{align}
s_0 = &\frac{\alpha_1+\alpha_2+\alpha_3-2Q}{\gamma},
\end{align}
where $\alpha_i$ are the insertion points, $\gamma$ is the fundamental constant of the GMC\footnote{The GMC measure in 2D is classified as subcritical if $\gamma<2,$ critical if $\gamma=2,$ and supercritical if $\gamma>2.$ The constant $\gamma^2$ is referred to as the intermittency parameter of the GMC. The constant $Q$ is related to $\gamma$ by $Q=\frac{2}{\gamma} + \frac{\gamma}{2}.$}, 
and $Q$ is related to the central charge $c_L$ of the Liouville theory by $c_L=1+6Q^2,$ cf. \cite{Vargas} for a review of the original and the modern mathematically rigorous approaches. The GMC measure on the sphere depends non-trivially on the choice of
the conformal background metric. The proof of the DOZZ formula in \cite{KRV} was given for the metric $g(x)=|x|_+^{-4}.$ The remarkable feature of the DOZZ formula is that its right-hand side, which is the value of the Mellin transform at $s_0,$ the so-called structure constant, remains the same, up to a metric-dependent scaling factor, for any background metric \cite{David}. 

The focus of this paper is the metric-independent (universal) part of the structure constant. As the dependence of the GMC law on the choice of the metric is unknown, it is natural to ask about what can be learned about this law from the structure constant
itself. Given the universality of the structure constant, this information is equally universal, \emph{i.e.} applies to any GMC measure on the sphere.  Clearly, as the DOZZ formula holds for any conformal background metric and the GMC measure depends on the metric, the DOZZ formula does not determine the GMC law uniquely. This lack of uniqueness is stronger than the lack of uniqueness of 1D GMC conjectures, where the match was made at all finite integer moments as opposed to a single point. Nonetheless, the DOZZ formula puts a highly non-trivial constraint on the GMC law so that by constructing probability distributions that are consistent with the DOZZ formula one can substantially narrow down the search for this law. 

The main contribution of this paper is the construction of a family
of positive probability distributions, whose Mellin transform is defined and analytic over the same domain as that of the GMC measure with three insertion points and takes on the same value at $s_0$ as a function of $\gamma$ and insertion points as the Mellin transform of the GMC measure, \emph{i.e.} satisfies the DOZZ formula. Thus, our result can be thought of as analytic continuation of the DOZZ formula as a function of $s_0$ to the complex plane. The idea of such continuation and mathematical methods used to effect it are similar to our work in 1D and are based on the theory of Barnes beta probability distributions. We first identify the minimal solution, whose structure is essentially imposed uniquely by 
the structure of the Upsilon terms in the DOZZ formula itself. We prove that it is the Mellin transform of a probability distribution
by factoring it into the product of Mellin transforms of Barnes beta distributions of types  $(2,1)$ and $(2,2).$ 
The minimal solution has no free parameters and is symmetric in $(\alpha_1, \alpha_2, \alpha_3).$ We then deform 
each of the Barnes beta factors so that they keep the same value at $s_0$ and remain the Mellin transform of a probability distribution.
This produces a three parameter family of probability distributions that extend the minimal solution. The three deformation parameters
control the symmetry of the distribution in $(\alpha_1, \alpha_2, \alpha_3)$ and allow for solutions that are symmetric in
$(\alpha_1, \alpha_2, \alpha_3)$ or only symmetric in $(\alpha_1, \alpha_3)$ or not symmetric at all. 

The metric-dependent scaling factor is the only information about the metric dependence that the DOZZ formula reveals about the GMC law. We provide the trivial analytic continuation of this factor that is also consistent with the known scaling behavior of the GMC measure. Otherwise, we do not investigate the dependence on the metric any further as it requires additional information about the GMC law that is currently unknown such as its first moment as a function of the insertion points, for example. In particular, we note that our construction does not match the first moment of the GMC law at zero insertion points or the small deviation asymptotic 
of GMC measures having everywhere positive curvature. However, in the special case of $s_0=0$ $(\alpha_1+\alpha_2+\alpha_3=2Q)$ 
we provide a modified construction that matches the small deviation asymptotic as well as the DOZZ formula. 

The main technical innovation is the construction of one-parameter deformations of general Barnes beta
distributions of types $(2,1)$ and $(2,2)$ having the property that the deformed Mellin transform has the same value at a given point
as the original Mellin transform. We also provide an original construction of conformal metrics on the sphere having the property that the corresponding GMC measure is symmetric in all the insertion points. 
Finally, we give an explicit computation of the constant $\Upsilon'_{\frac{\gamma}{2}}(0)$ in the DOZZ formula, which appears to be new. 

As explained above, all major advances of GMC integrability research were preceded by exact conjectures and, further, the 
Liouville theory based method of proof specifically requires a conjecture as its starting point. This makes us believe that our construction of a family of probability distributions satisfying the DOZZ formula provides an essential advance of the field and opens up an avenue for new integrability results in both the 2D GMC and Liouville theories as it provides the first necessary step towards formulating a conjecture for the law of GMC on the sphere. The structure constant in the DOZZ formula is quite complicated and imposes a very delicate constraint on the underlying distribution so that it is reasonable to suppose that by matching it
one is not too far off from the actual law of the GMC, especially as the domain of analyticity of the constructed Mellin transform coincides with that of the Mellin transform of the GMC. Our construction is flexible enough to narrow down the search further by matching
the symmetry of the GMC law in $(\alpha_1, \alpha_3)$ or $(\alpha_1, \alpha_2, \alpha_3)$ that are known to hold for particular background metrics. 

The plan of the paper is as follows. In Section 2 we briefly remind the reader of the definition of GMC measures on the Riemann sphere,
state the DOZZ formula in terms of such measures, and summarize the properties of the GMC measures that we match. In Section 3 we summarize the main mathematical tools that are used in the rest of the paper. In Section 4 we give the main results. In Section 5 we give the proofs. Section 6 shows where our construction falls short of being a conjecture, gives a detailed summary of all constraints that need to be satisfied to formulate a conjecture, and lists some open questions. Conclusions are given in Section 7. The Appendix summarizes the scaling and symmetry properties of GMC measures on the sphere and gives
our construction of GMC measures on the sphere that are symmetric in all insertion points.

\section{Problem Statement}
In this section we will briefly review the definition of the 2D GMC on the Riemann sphere (identified with the extended complex plane) corresponding to a background metric $g(x)$ 
and refer the reader to \cite{David}, \cite{KRV}, \cite{RVrevis}, \cite{Vargas} and for details.

Define the following quantities,
\begin{equation}
Q=\frac{2}{\gamma} + \frac{\gamma}{2}, \; 0<\gamma<2.
\end{equation}
\begin{equation}\label{taudef}
\tau = \frac{4}{\gamma^2}.
\end{equation}
\begin{equation}
\Upsilon_{\frac{\gamma}{2}} (x) = \frac{1}{\Gamma_{\frac{\gamma}{2}}(x)\,\Gamma_{\frac{\gamma}{2}}(Q-x)}.\label{ups}
\end{equation}
where $\Gamma_{\frac{\gamma}{2}}(x)$ denotes the physicist's double gamma function, cf. \cite{Nak} and $\Upsilon_{\frac{\gamma}{2}} (x)$ is the Upsilon function.  

Consider the subcritical GMC on the Riemann sphere. Given a conformal metric on the sphere, let the Gaussian Free Field $X_g(x)$ corresponding to the metric be defined by
\begin{equation}\label{Xg}
{\bf E}[X_g(x)\, X_g(y)] = \log\frac{1}{|x-y|} - \frac{1}{4}\log g(x) -  \frac{1}{4}\log g(y) + \chi_g.
\end{equation}
$\chi_g$ is a constant depending on the metric. The field $X_g(x)$ is defined to have the zero average with respect to the curvature of the metric $g(x),$ cf. \eqref{curvatureaverage}. If one writes the metric in the form $g(x)=e^{\varphi(x)}\,g_+(x),$ where $g_+(x)=|x|_+^{-4},$
$|x|_+ = \max(1, |x|),$  then the constant $\chi_g$ is given by
\begin{equation}\label{chigdef}
\chi_g = \frac{1}{32\pi} \Bigl[ \int\limits_{\mathbb{C}}|\nabla \varphi(x)|^2\, dx + 8 \int\limits_{0}^{2\pi} \varphi(e^{i\theta}) d\theta   \Bigr].
\end{equation}
It can be positive, negative or zero.
For example, for the metric  $g(x) = |x|_+^{-4}$ it equals 0. 
Given three insertion points $\alpha_1,\alpha_2, \alpha_3,$
let
\begin{equation}\label{alphabarsum}
\bar{\alpha}=\alpha_1+\alpha_2+\alpha_3.
\end{equation}
Define the subcritical GMC\footnote{As the field $X_g(x)$ is not defined pointwise, the usual renormalization procedure is required to
define the GMC, cf. \cite{RVrevis}.}
 and the corresponding $\rho_g$ by the formulas,
\begin{gather}
\rho_g(\alpha_1, \alpha_2, \alpha_3, \gamma) = e^{\frac{\gamma^2 \chi_g}{2}} \int_\mathbb{C}
\frac{g(x)^{-\frac{\gamma}{4}\bar{\alpha}}}{|x|^{\gamma\alpha_1} |1-x|^{\gamma\alpha_2}} 
M_{\gamma, g}(dx), \label{rhog} \\
M_{\gamma, g}(dx) = e^{\gamma\,X_g(x) -\frac{\gamma^2}{2} {\bf E}[X_g(x)^2]} g(x) dx.
\end{gather}
The natural boundaries for the variables involved that guarantee the existence of the Mellin transform\footnote{Throughout this paper
we define the Mellin transform of a random variable $X$ to mean ${\bf E} [X^s]$ with the exception of the Mellin transform of
$\rho_g(\alpha_1, \alpha_2, \alpha_3, \gamma)$ written as ${\bf E}\bigl[ \rho_g(\alpha_1, \alpha_2, \alpha_3, \gamma)^{-s}\bigr]$ to be consistent with
its definition in \cite{KRV} and \cite{Vargas}.}
${\bf E}\bigl[ \rho_g(\alpha_1, \alpha_2, \alpha_3, \gamma)^{-s}\bigr]$
of the quantity $\rho_g$ are given in \cite{Vargas}, cf. Eq. (2.12):
\begin{gather}
-\Re(s)<\tau, \label{mc}\\
-\Re(s)< \frac{2}{\gamma}(Q-\alpha_k), \label{mc2}\\
\alpha_k<Q. \label{bin1}
\end{gather}
Let 
\begin{align}
s_0 = &\frac{\bar{\alpha}-2Q}{\gamma},  \label{s0}
\end{align}
and
\begin{equation}
l(z) = \frac{\Gamma(z)}{\Gamma(1-z)}.
\end{equation}
Assume that $s_0$ satisfies (\ref{mc}) and (\ref{mc2}). Then, the main result of \cite{KRV} is the proof\footnote{The proof in \cite{KRV} is restricted to the metric $g(x)=|x|_+^{-4}$ but goes through for a general metric on the sphere, cf. \cite{David}.} of the DOZZ formula 
\begin{equation}
\Gamma(s) \, {\bf E}\bigl[ \rho_g(\alpha_1, \alpha_2, \alpha_3, \gamma)^{-s} \bigr]\Big\vert_{s=s_0} = \frac{\gamma}{2}\, e^{\frac{s_0^2\gamma^2}{2}\chi_g}\,
\Bigl(  \pi  l(\frac{\gamma^2}{4}) (\frac{\gamma}{2})^{2-\frac{\gamma^2}{2}}\Bigr)^{-s_0}\,
 \frac{\Upsilon'_{\frac{\gamma}{2}}(0)\prod\limits_{i=1}^3 \Upsilon_{\frac{\gamma}{2}} (\alpha_i)}
{ \Upsilon_{\frac{\gamma}{2}} (\frac{\bar{\alpha}}{2}-Q)
\prod\limits_{i=1}^3 \Upsilon_{\frac{\gamma}{2}} (\frac{\bar{\alpha}}{2}-\alpha_i)}.\label{gDOZZ}
\end{equation}
It must be emphasized that this result requires $\alpha_i>0$ $\forall i,$ cf. Lemma \ref{Inequalitiies} below, but the Mellin
transform is defined if $\alpha_i=0$ $\forall i,$ cf. \eqref{mc} -- \eqref{bin1} and corresponds to the total mass of the GMC measure. 
The difference between the two is that the DOZZ theorem requires $s_0$ to satisfy \eqref{mc} and \eqref{mc2}, whereas the existence
of the Mellin transform does not.

The law of $\rho_g(\alpha_1, \alpha_2, \alpha_3, \gamma)$ is not known for any metric. Aside from its existence,
$\rho_g(\alpha_1, \alpha_2, \alpha_3, \gamma)$ is known to possess three additional properties.
\begin{enumerate}
\item
When $g(x)$ is multiplied by a positive
constant $\lambda,$  $\rho_g(\alpha_1, \alpha_2, \alpha_3, \gamma)$ satisfies the scaling invariance,  
\begin{equation}
{\bf E}\bigl[ \rho_{\lambda g}(\alpha_1, \alpha_2, \alpha_3, \gamma)^{-s} \bigr] = \lambda^{s s_0 \frac{\gamma^2}{4}} \,{\bf E}\bigl[ \rho_{g}(\alpha_1, \alpha_2, \alpha_3, \gamma)^{-s} \bigr],\label{rhogscaling}
\end{equation}
cf. Appendix A. 
\item 
If the curvature of the metric $R_g(x) = -\frac{1}{g(x)} \Delta \log g(x)$ satisfies $R_g(x)>0$ $\forall x\in\mathbb{C},$ then $\rho_g$ satisfies the following
small deviation estimate,
\begin{equation}\label{smalldeviation}
{\bf P}\Bigl [\rho_g(\alpha_1, \alpha_2, \alpha_3, \gamma) \leq \varepsilon\Bigr] \thicksim e^{-\varepsilon^{-\frac{4}{\gamma^2}}}
.
\end{equation}
This is a corollary of Theorem 4.5 in \cite{Lac} and the identity
\begin{equation}\label{curvatureaverage}
\int_\mathbb{C} X_g(x) R_g(x) g(x) dx = 0
\end{equation}
that is valid for any metric. 
This estimate implies that $\rho_g$ does not have a lognormal factor for all metrics having everywhere positive curvature. For example, this conclusion applies to the natural metric $g(x) = (1+|x|^2)^{-2}.$

\item
If the background metric $g(x)$ satisfies the property 
\begin{equation}\label{ginverseproperty}
g\bigl(\frac{1}{x}\bigr) = |x|^4\,g(x), \,x\in \mathbb{C},
\end{equation}
then, 
\begin{equation}\label{a1a3}
\rho_g(\alpha_3, \alpha_2, \alpha_1, \gamma) = \rho_g(\alpha_1, \alpha_2, \alpha_3, \gamma) 
\end{equation}
in law, cf. Appendix A. The natural conformal metrics on the Riemann sphere such as 
\begin{align}
g(x) = & |x|_+^{-4}, \label{g+} \\
g(x) = & \frac{1}{(1+|x|^2)^2}, \label{ground}
\end{align}
satisfy (\ref{ginverseproperty}) so that the corresponding  $\rho_g$ is symmetric under  $\alpha_1\leftrightarrow \alpha_3.$ 
In addition, we prove in this paper that for any metric $g(x)$ satisfying \eqref{ginverseproperty} the transformed metric 
\begin{equation}
T[g](x) = g(x)  + \frac{g\bigl(\frac{x}{x-1}\bigr)}{|x-1|^4} + g(1-x)
\end{equation}
has the property that the corresponding $\rho_{T[g]}$ is symmetric in all  $(\alpha_1, \alpha_2, \alpha_3),$ cf. Appendix B.
\end{enumerate}
We finally note that it is easy to write down the multiple integral expression for the $n$th moment of $\rho_g,$
\begin{equation}\label{exactfirstmoment}
{\bf E}[\rho_g(\alpha_1, \alpha_2, \alpha_3, \gamma) ^n] = e^{\frac{n^2\gamma^2}{2}\chi_g} \int\limits_{\mathbb{C}^n} \prod\limits_{i=1}^n 
g(x_i)^{1-\frac{\gamma}{4}\overline{\alpha}-\frac{\gamma^2}{4}(n-1)} |x_i|^{-\gamma\alpha_1}|1-x_i|^{-\gamma\alpha_2}
\prod\limits_{i<j}^n |x_i-x_j|^{-\gamma^2} dx
\end{equation}
but is difficult to compute in closed form for non-zero $\alpha_i,$ even for $n=1.$ In the simplest case of $n=1$ and zero insertion points, this simplifies to
\begin{equation}\label{firstmomentzeroalpha}
{\bf E}[\rho_g(\alpha_i=0\,\forall i, \gamma)] = e^{\frac{\gamma^2}{2}\chi_g} \int\limits_{\mathbb{C}}  g(x)\,dx.
\end{equation}

In what follows we will consider the problem of constructing positive random variables $M(\alpha,\gamma)$ parameterized by 
$(\alpha_1, \alpha_2, \alpha_3),$ $\gamma,$ and possibly some additional free parameters whose Mellin transform ${\bf E}[M^s(\alpha,\gamma)]$: 
\begin{itemize}
\item is defined under the conditions in \eqref{mc} -- \eqref{bin1},
\item satisfies the DOZZ formula (\ref{gDOZZ}) without the metric-dependent scaling factor,
\begin{equation}\label{nogDOZZ}
\Gamma(s) \, {\bf E}\bigl[ M^s(\alpha,\gamma) \bigr]\Big\vert_{s=s_0} = \frac{\gamma}{2}\,
\Bigl(  \pi  l(\frac{\gamma^2}{4}) (\frac{\gamma}{2})^{2-\frac{\gamma^2}{2}}\Bigr)^{-s_0}\,
 \frac{\Upsilon'_{\frac{\gamma}{2}}(0)\prod\limits_{i=1}^3 \Upsilon_{\frac{\gamma}{2}} (\alpha_i)}
{ \Upsilon_{\frac{\gamma}{2}} (\frac{\bar{\alpha}}{2}-Q)
\prod\limits_{i=1}^3 \Upsilon_{\frac{\gamma}{2}} (\frac{\bar{\alpha}}{2}-\alpha_i)},
\end{equation}
\item is flexible enough, depending on the choice of the free parameters, to be symmetric in law in all $(\alpha_1, \alpha_2, \alpha_3)$ or to be
only symmetric under $(\alpha_1\leftrightarrow \alpha_3)$ or to have no particular symmetry. 
\end{itemize}
Such a construction provides the necessary first step for formulating a conjecture for the law of $\rho_g(\alpha_1, \alpha_2, \alpha_3, \gamma)$ 
in the form
\begin{equation}\label{naivecon}
\rho_g(\alpha_1, \alpha_2, \alpha_3, \gamma) = e^{-s_0 \frac{\gamma^2}{2}\chi_g} \, M^{-1}(\alpha,\gamma),
\end{equation}
which satisfies the DOZZ formula \eqref{gDOZZ} and is consistent with the scaling invariance \eqref{rhogscaling}, cf. \eqref{lambdascaling} in Appendix A. 
We note that in this proposal the dependence on the metric comes  from both the exponential pre-factor and the free parameters, which in general depend on the metric. Of particular interest is the special case of $\alpha_i=0$ $\forall i,$ which then corresponds to the total mass of the GMC on the sphere
corresponding to the metric $g(x).$ 

The task of formulating an actual conjecture for a particular metric requires some understanding of how the GMC law depends on the choice of the metric and at the very least involves matching the first moment in \eqref{firstmomentzeroalpha} and, if the metric has everywhere positive curvature, the small deviation asymptotic in \eqref{smalldeviation}. These questions are outside the scope of this paper as they require information about the GMC law that goes beyond what is revealed by the DOZZ formula alone. Our construction will need to be refined once this information becomes available
as in its current form it does not satisfy \eqref{smalldeviation} or \eqref{firstmomentzeroalpha} in general. In the special case of $s_0=0$ we provide a modified construction that is consistent with the DOZZ formula and the desired small deviation asymptotic in \eqref{smalldeviation}. 


\section{Building Blocks}


We start with the relationship between the physicist's double gamma function that appears in (\ref{ups}) and the double gamma function, $\Gamma_2(x|\tau),$ that is typically used in the mathematical literature and that we will use below. It is given in \cite{RemyZhu}, footnote 7,
\begin{equation}\label{diction}
\Gamma_{\frac{\gamma}{2}}(x) = (\frac{2}{\gamma})^{\frac{1}{2} (x-\frac{Q}{2})^2} \frac{\Gamma_2(\frac{2x}{\gamma}|\tau)}{\Gamma_2(\frac{Q}{\gamma}|\tau)}
,
\end{equation}
where the double Gamma $\Gamma_2(x|\tau)$ is reviewed in sect. 3 of \cite{Me18}, see also sect. 2 of \cite{Me14} and \cite{Ruij}. 

The first building block that is used below is the Barnes beta distribution of type (2, 2). We denote it by $\beta_{2,2}(b_0, b_1, b_2),$ cf Theorem 7.1 in Sect. 7.1 and Eq. (7.24) in \cite{Me18}. 
Given $b_0>0,$ $b_1,$ $b_2,$ such that $b_0+b_1>0,$  $b_0+b_2>0,$ $b_0+b_1+b_2>0,$ and $b_1b_2>0,$\footnote{The conditions 
$b_0>0,$ $b_0+b_1>0,$  $b_0+b_2>0,$ $b_0+b_1+b_2>0,$ are needed for the existence of the right-hand side of (\ref{barnesbeta22}),
$b_1b_2>0$ is needed for the positivity of the L\'evy-Khinchine spectral function.} and $\Re(s) + b_0>0,$  $\Re(s) + b_0+b_1>0,$
 $\Re(s) + b_0+b_2>0,$  $\Re(s) + b_0+b_1+b_2>0,$
\begin{equation}\label{barnesbeta22}
{\bf E}[\beta_{2,2}^{s}] =
\frac{\Gamma_2(s+b_0\,|\,\tau)}{\Gamma_2(b_0\,|\,\tau)}
\frac{\Gamma_2(b_0+b_1\,|\,\tau)}{\Gamma_2(s+b_0+b_1\,|\,\tau)}
\frac{\Gamma_2(b_0+b_2\,|\,\tau)}{\Gamma_2(s+b_0+b_2\,|\,\tau)}
\frac{\Gamma_2(s+b_0+b_1+b_2\,|\,\tau)}{\Gamma_2(b_0+b_1+b_2\,|\,\tau)}.
\end{equation}
The Mellin transform has the asymptotic behavior,
\begin{equation}
{\bf E}[\beta_{2,2}^{s}]  \thicksim s^{-\frac{b_1 b_2}{\tau}}, \; \Re(s)\longrightarrow +\infty.
\end{equation}

The second building block is the Barnes beta distribution of type (2, 1). We denote it by $\beta_{2,1}(b_0, b_1),$ cf Theorem 7.5 in Sect. 7.2 and Eq. (7.35) in \cite{Me18}. 
Given $b_0>0$ and $b_1>0,$ 
\begin{equation}\label{barnesbeta21}
{\bf E}[\beta_{2,1}^{s}] =
\frac{\Gamma_2(s+b_0\,|\,\tau)}{\Gamma_2(b_0\,|\,\tau)}
\frac{\Gamma_2(b_0+b_1\,|\,\tau)}{\Gamma_2(s+b_0+b_1\,|\,\tau)}, \, \Re(s)>-b_0.
\end{equation}
The Mellin transform has the asymptotic behavior,
\begin{equation}\label{beta21asym}
{\bf E}[\beta_{2,1}^{s}]  \thicksim e^{\frac{b_1}{\tau} s\log(s) +O(s)}, \; \Re(s)\longrightarrow +\infty.
\end{equation}

We will also need the functional equations of the double gamma function,
\begin{align}
\frac{\Gamma_2(s+\tau|\tau)}{\Gamma_2(s+\tau+1|\tau)} & = \frac{\tau^{\frac{s+\tau}{\tau}-\frac{1}{2}}}{\sqrt{2\pi}} \, \Gamma\bigl(\frac{s+\tau}{\tau}\bigr), \label{doublefunceq1}\\
\frac{\Gamma_2(s|\tau)}{\Gamma_2(s+\tau|\tau)} & = \frac{1}{\sqrt{2\pi}}\,\Gamma(s),  \label{doublefunceq2}
\end{align}
cf.  Eqs. (3.5) and (3.6) in \cite{Me18} 
with  $a_1=1,$ $a_2=\tau.$ 

We finally recall the definition of the Fyodorov-Bouchaud random variable, cf. \cite{FyoBou}.
\begin{equation}
{\bf E}\bigl[ Y^{s} \bigr] = 
\Gamma(1+\frac{s}{\tau}).
\end{equation}
More generally, given $b>0,$ $c>0,$
\begin{equation}
{\bf E}\bigl[ Y(b, c)^{s} \bigr] = \frac{\Gamma(b+\frac{s}{c})}{\Gamma(b)}, 
\end{equation}
is the Mellin transform of a (generalized) Frechet RV. It has the density,
\begin{equation}
\text{pdf}(y) = \frac{c}{\Gamma(b)} \, y^{c b-1}\, e^{-y^c}.
\end{equation}

We end this section with a comment about the small deviation asymptotics of the random variables considered above. For our purposes we need the estimate
of the form ${\bf P}(X^{-1}\leq \varepsilon)$ for $X=\beta_{21}(b_0, b_1), \, X=  Y(b, c),$ and products of such random variables. The tail of $\beta_{22}$ is too thin for our needs. We start with $Y(b, c).$ The probability ${\bf P}(Y(b, c)^{-1}\leq \varepsilon)$ can be computed exactly in terms of incomplete Gamma function but we can estimate it as follows. This probability involves the long tail of $Y(b, c),$ where its probability density is
dominated by the $e^{-y^c}$ term, corresponding to the asymptotic of the Mellin transform of the form 
\begin{equation}
{\bf E}\bigl[ Y(b, c)^{s} \bigr] \thicksim e^{\frac{1}{c}s\log s}
. 
\end{equation}
Hence the small deviation probability behaves like
\begin{equation}\label{smalldeviationgeneral}
{\bf P} (Y(b, c)^{-1}\leq \varepsilon) \thicksim  e^{-\frac{1}{\varepsilon^c}}.
\end{equation}
It remains to notice that the asymptotic behavior of the Mellin transform of $\beta_{21}$ and of products of such variables is the same as that of $Y(b, c),$
cf. \eqref{beta21asym}, \emph{i.e.} 
\begin{equation}\label{mellinasymptoticgeneral}
{\bf E}\bigl[X^{s} \bigr] \thicksim e^{\frac{1}{c}s\log s}
\end{equation}
for some constant $c$ so that \eqref{smalldeviationgeneral} still holds. For example, for $X=\beta_{21}(b_0, b_1)$ we have from \eqref{beta21asym} $c=\tau/b_1$ and for the product $X=\beta_{21}(b_0, b_1)\beta_{21}(b'_0, b'_1)$ we similarly have $c=\tau/(b_1+b'_1).$

\section{Results}
In this section we will construct probability distributions that satisfy (\ref{gDOZZ}) and are defined subject to
(\ref{mc}), (\ref{mc2}), and \eqref{bin1}. We start with the \emph{minimal} solution. Throughout this section
we assume that the $\alpha$s satisfy \eqref{bin1} and $s_0$ in \eqref{s0} satisfies (\ref{mc}), (\ref{mc2}).

Introduce the notation,
\begin{equation}\label{Cconst}
C_\gamma(\alpha_1, \alpha_2, \alpha_3) =  \frac{\gamma}{2}\, \Bigl(  \pi  l(\frac{\gamma^2}{4}) (\frac{\gamma}{2})^{2-\frac{\gamma^2}{2}}\Bigr)^{-s_0}\,
\frac{\Upsilon'_{\frac{\gamma}{2}}(0)\prod\limits_{i=1}^3 \Upsilon_{\frac{\gamma}{2}} (\alpha_i)}
{ \Upsilon_{\frac{\gamma}{2}} (\frac{\bar{\alpha}}{2}-Q)
\prod\limits_{i=1}^3 \Upsilon_{\frac{\gamma}{2}} (\frac{\bar{\alpha}}{2}-\alpha_i)}.
\end{equation}
so that we can write the DOZZ formula in the simplified form,
\begin{equation}
\Gamma(s) \, {\bf E}\bigl[ \rho_g(\alpha_1, \alpha_2, \alpha_3, \gamma)^{-s} \bigr]\Big\vert_{s=s_0} =   e^{\frac{s_0^2\gamma^2}{2}\chi_g}\,
C_\gamma(\alpha_1, \alpha_2, \alpha_3),
\end{equation}
where the constant $C_\gamma(\alpha_1, \alpha_2, \alpha_3)$ is independent of the metric $g(x).$

We start with a heuristic derivation that motivates the formal results. 
Let us observe that the Upsilon terms in the constant  $C_\gamma(\alpha_1, \alpha_2, \alpha_3)$ have a particular structure. 
By expanding it in terms of the double gamma factors we can write the Upsilon terms in the form
\begin{equation}\label{Cexpanded}
\Upsilon'_{\frac{\gamma}{2}}(0) \Gamma_{\frac{\gamma}{2}}(\frac{\bar{\alpha}}{2}-Q) \Gamma_{\frac{\gamma}{2}}(2Q-\frac{\bar{\alpha}}{2})
\prod\limits_{i=1}^3 
\frac{   \Gamma_{\frac{\gamma}{2}}(\frac{\bar{\alpha}}{2}-\alpha_i) }{\Gamma_{\frac{\gamma}{2}}(Q-\alpha_i) }
\frac{  \Gamma_{\frac{\gamma}{2}}(Q+\alpha_i-\frac{\bar{\alpha}}{2})  }{ \Gamma_{\frac{\gamma}{2}}(\alpha_i)    }.
\end{equation}
For the ratios of the double gamma factors we have the pattern
\begin{gather}
\frac{\bar{\alpha}}{2}-\alpha_i - (Q-\alpha_i)  = \frac{\bar{\alpha}}{2} - Q, \\
Q+\alpha_i-\frac{\bar{\alpha}}{2} - \alpha_i = Q-\frac{\bar{\alpha}}{2}.
\end{gather}
What this means is that the difference between the arguments of the factors in the numerator and denominator is $\pm 
\bigl(\frac{\bar{\alpha}}{2} - Q\bigr),$ which is, up to sign, $\frac{\gamma}{2} s_0$ and for each ratio with
$+\bigl(\frac{\bar{\alpha}}{2} - Q\bigr)$ there is a ratio with $-\bigl(\frac{\bar{\alpha}}{2} - Q\bigr).$
Further, the same pattern is seen in the remaining factors if we write them in the form\footnote{\label{myfoot}We replaced 
$\Upsilon'_{\frac{\gamma}{2}}(0)$ with $1/\Gamma^2_{\frac{\gamma}{2}}(Q),$ cf. Lemma \ref{upsprime} below and
modified $\frac{\bar{\alpha}}{2} - Q$ to $\frac{\bar{\alpha}}{2}.$ 
By the functional equation of the double gamma function, $\Gamma(s) \Gamma(1+\frac{s}{\tau}) \Gamma_{\frac{\gamma}{2}}(\frac{\gamma}{2}s+Q) \propto \Gamma_{\frac{\gamma}{2}}(\frac{\gamma}{2}s),$ cf. \eqref{diction} and \eqref{shift}.
}
\begin{equation}
\frac{\Gamma_{\frac{\gamma}{2}}\bigl(\frac{\bar{\alpha}}{2}\bigr)}{\Gamma_{\frac{\gamma}{2}}(Q)}
\frac{\Gamma_{\frac{\gamma}{2}}(2Q-\frac{\bar{\alpha}}{2})}{\Gamma_{\frac{\gamma}{2}}(Q)}.
\end{equation}
This pattern, which is precisely the pattern of Barnes beta factors, cf. \eqref{barnesbeta22} and \eqref{barnesbeta21}, motivates the following ``analytic continuation'' from $s_0$ to complex $s,$
\begin{equation}
\Gamma(1+\frac{s}{\tau}) 
\frac{\Gamma_{\frac{\gamma}{2}}(\frac{\gamma}{2}s+Q)}{\Gamma_{\frac{\gamma}{2}}(Q)}
\frac{\Gamma_{\frac{\gamma}{2}}(2Q-\frac{\bar{\alpha}}{2})}{\Gamma_{\frac{\gamma}{2}}(\frac{\gamma}{2}s+2Q-\frac{\bar{\alpha}}{2})}
\prod\limits_{i=1}^3 
\frac{\Gamma_{\frac{\gamma}{2}}(\frac{\gamma}{2}s+Q-\alpha_i)}{\Gamma_{\frac{\gamma}{2}}(Q-\alpha_i)}
\frac{\Gamma_{\frac{\gamma}{2}}(Q+\alpha_i-\frac{\bar{\alpha}}{2})}{\Gamma_{\frac{\gamma}{2}}(\frac{\gamma}{2}s+Q+\alpha_i-\frac{\bar{\alpha}}{2})}.
\end{equation}
This expression has the properties that when multiplied by $\Gamma(s)$ it evaluates at $s=s_0,$ up to a trivial factor, to the expression in \eqref{Cexpanded} and its value at $s=0$ is 1. Hence it provides a suitable candidate for the Mellin transform of a probability distribution
that satisfies the DOZZ formula. In fact, as the following result shows, it gives the \emph{minimal} solution.

\begin{theorem}\label{Minimal}
Define the function 
\begin{align}
\mathfrak{M}(s\,|\,\alpha, \gamma) = & \Gamma(1+\frac{s}{\tau}) 
\frac{\Gamma_2(s+1+\tau\,|\,\tau)}{\Gamma_2(1+\tau\,|\,\tau)}
\frac{\Gamma_2(2(1+\tau)-\frac{\bar{\alpha}}{\gamma}\,|\,\tau)}{\Gamma_2(s+2(1+\tau)-\frac{\bar{\alpha}}{\gamma}\,|\,\tau)} \nonumber \\
& \times \prod\limits_{i=1}^3 
\frac{\Gamma_2(s+1+\tau-\frac{2}{\gamma}\alpha_i\,|\,\tau)}{\Gamma_2(1+\tau-\frac{2}{\gamma}\alpha_i\,|\,\tau)}
\frac{\Gamma_2(1+\tau+\frac{2}{\gamma}(\alpha_i-\frac{\bar{\alpha}}{2})\,|\,\tau)}{\Gamma_2(s+1+\tau+\frac{2}{\gamma}(\alpha_i-\frac{\bar{\alpha}}{2})\,|\,\tau)}.\label{firstM}
\end{align}
Then,
\begin{enumerate}
\item The function $\mathfrak{M}(s\,|\,\alpha, \gamma)$ is analytic in $s$ over the domain specified in (\ref{mc}), (\ref{mc2})
and is the Mellin transform of a positive, log-infinitely divisible probability distribution defined as the product of independent Frechet, $\beta_{22},$ and two $\beta_{21}$ distributions. 
\begin{align}
\mathfrak{M}(s\,|\,\alpha, \gamma) = & \Gamma(1+\frac{s}{\tau}) \,
{\bf E}\Bigl[\beta_{22}^{s}\Bigl(b_0=1+\tau, \, b_1=\frac{2}{\gamma}(\alpha_1-\frac{\bar{\alpha}}{2}), \, b_2=
\frac{2}{\gamma}(\alpha_3-\frac{\bar{\alpha}}{2})\Bigr)\Bigr] \nonumber \\ \times &
{\bf E}\Bigl[\beta_{21}^{s}\Bigl(b_0=1+\tau-\frac{2}{\gamma}\alpha_1, \, b_1=1+\tau+\frac{2}{\gamma}(\alpha_1-\frac{\bar{\alpha}}{2})\Bigr)\Bigr]  \nonumber \\ \times &
{\bf E}\Bigl[\beta_{21}^{s}\Bigl(b_0=1+\tau-\frac{2}{\gamma}\alpha_3, \, b_1=\frac{2}{\gamma}(\frac{\bar{\alpha}}{2} - \alpha_1)\Bigr)\Bigr].
\end{align}
Let us divide this distribution by the constant $\frac{\pi \tau^{\frac{1}{\tau}} \Gamma\big(\frac{1}{\tau}\bigr)}{\Gamma\big(1-\frac{1}{\tau}\bigr)}$ and define the random variable $M(\alpha, \gamma)$ by
\begin{equation}\label{myfactor}
{\bf E} [M^s(\alpha, \gamma)] = \Bigl( 
\frac{\pi \tau^{\frac{1}{\tau}} \Gamma\big(\frac{1}{\tau}\bigr)}{\Gamma\big(1-\frac{1}{\tau}\bigr)}
\Bigr)^{-s} \mathfrak{M}(s\,|\,\alpha, \gamma).
\end{equation}
\item At $s=s_0$ the Mellin transform of $M(\alpha, \gamma)$  
takes on the value  
\begin{equation}\label{mDOZZ}
\Gamma(s) \, 
{\bf E} [M^s(\alpha, \gamma)]\Big\vert_{s=s_0}  =
C_\gamma(\alpha_1, \alpha_2, \alpha_3),
\end{equation}
and so satisfies the DOZZ formula in (\ref{nogDOZZ}). It also symmetric in all $(\alpha_1, \alpha_2, \alpha_3).$
\item In the special case of $\alpha_i=0$  $\forall i$ (the total mass), this Mellin transform simplifies to
\begin{align}
 \Bigl( 
\frac{\pi  \Gamma\big(\frac{1}{\tau}\bigr)}{\Gamma\big(1-\frac{1}{\tau}\bigr)}
\Bigr)^{-s} \, \Gamma(1+\frac{s}{\tau}) 
\frac{\Gamma(s+1+\tau)}{\Gamma(1+\tau)} \frac{\Gamma(1+\frac{s+1+\tau}{\tau})}{\Gamma(1+\frac{1+\tau}{\tau})}. \label{totalmass}
\end{align}
and corresponds to the product of three independent Frechet distributions.
\end{enumerate}
\end{theorem}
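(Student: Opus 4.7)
The plan is to verify the three parts in order: Part 1 is a direct factorization, while Parts 2 and 3 are bookkeeping exercises driven by the dictionary \eqref{diction} and the functional equations \eqref{doublefunceq1}--\eqref{doublefunceq2}. For Part 1, I would match $\mathfrak{M}(s\,|\,\alpha,\gamma)$ factor-by-factor against the product of Mellin transforms of the Frechet, $\beta_{2,2}$, and the two $\beta_{2,1}$ random variables listed. The key observation for the $\beta_{2,2}$ piece with the stated parameters is that $b_0+b_1+b_2 = 1+\tau-\frac{2}{\gamma}\alpha_2$, so its four $\Gamma_2$ ratios cover the first factor of \eqref{firstM}, the $i=1,3$ factors of the second line, and the $i=2$ factor of the first line. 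The first $\beta_{2,1}$ has $b_0+b_1 = 2(1+\tau)-\bar{\alpha}/\gamma$, hence absorbs the remaining $i=1$ first-line factor together with the third factor of \eqref{firstM}. The second $\beta_{2,1}$ has $b_0+b_1 = 1+\tau+\frac{2}{\gamma}(\alpha_2-\frac{\bar{\alpha}}{2})$, absorbing the $i=3$ first-line and $i=2$ second-line factors, and the Fyodorov-Bouchaud $\Gamma(1+s/\tau)$ is the remaining piece. Positivity of all Barnes beta parameters is verified from \eqref{bin1} together with the triangle-type inequalities $\bar{\alpha}>2\alpha_k$ implied by \eqref{mc2} at $s=s_0$; analyticity on the domain \eqref{mc}--\eqref{mc2} and log-infinite divisibility then pass through from the constituent distributions.

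For Part 2, the central substitution is $\frac{\gamma}{2}(s_0+1+\tau)=\frac{\bar{\alpha}}{2}$, so that via \eqref{diction} every argument of $\Gamma_2$ in $\mathfrak{M}(s_0\,|\,\alpha,\gamma)$ translates into an argument of $\Gamma_{\gamma/2}$ appearing in the expanded form \eqref{Cexpanded} of $C_\gamma$. Because each $\Gamma_2$ ratio has numerator and denominator arguments differing by exactly $s_0$, the quadratic $(2/\gamma)^{(x-Q/2)^2/2}$ prefactors from \eqref{diction} telescope pairwise into clean powers of $\gamma/2$. The leftover Gamma and Frechet pieces $\Gamma(s_0)\Gamma(1+s_0/\tau)$ are then traded, via \eqref{doublefunceq1}--\eqref{doublefunceq2}, for the two double gamma factors needed to reconstitute the $\Upsilon'_{\gamma/2}(0)\propto 1/\Gamma_{\gamma/2}^2(Q)$ factor, as indicated in footnote \ref{myfoot}. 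Matching the residual constants against the normalization in \eqref{myfactor} (using $l(1/\tau)=\Gamma(1/\tau)/\Gamma(1-1/\tau)$) produces exactly the prefactor of $C_\gamma$. Symmetry in $(\alpha_1,\alpha_2,\alpha_3)$ is immediate from \eqref{firstM}, which depends on the $\alpha_i$ only through the symmetric triple product and $\bar{\alpha}$; the asymmetric-looking factorization of Part 1 is only one of several equivalent decompositions.

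For Part 3, setting $\alpha_i=0$ makes the two $\Gamma_2$ ratios inside the triple product in \eqref{firstM} mutually reciprocal, so the entire triple product telescopes to $1$, and it also sends $b_1=b_2=0$ in the $\beta_{2,2}$ factor, reducing it to a point mass. What remains is
\begin{equation*}
\mathfrak{M}(s\,|\,0,\gamma)=\Gamma\bigl(1+\tfrac{s}{\tau}\bigr)\,\frac{\Gamma_2(s+1+\tau\,|\,\tau)\,\Gamma_2(2+2\tau\,|\,\tau)}{\Gamma_2(1+\tau\,|\,\tau)\,\Gamma_2(s+2+2\tau\,|\,\tau)}.
\end{equation*}
Two successive applications of \eqref{doublefunceq1}--\eqref{doublefunceq2} (one shift by $\tau$, one by $1$) collapse this $\Gamma_2$ ratio into a product of two ordinary $\Gamma$ factors, matching \eqref{totalmass} after absorbing the powers of $\tau^{1/\tau}$ generated by the shifts and the normalization \eqref{myfactor}. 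Each of the three resulting $\Gamma$ factors is the Mellin transform of a Frechet random variable.

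The main obstacle is Part 2: carefully tracking the quadratic prefactors from \eqref{diction} together with the powers of $\tau$ and $\sqrt{2\pi}$ produced by \eqref{doublefunceq1}--\eqref{doublefunceq2}, so that the final constant agrees exactly with the one stipulated in \eqref{myfactor}. This is precisely where the author's explicit evaluation of $\Upsilon'_{\gamma/2}(0)$ becomes essential.
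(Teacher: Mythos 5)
Your proposal is correct and follows essentially the same path as the paper's proof: the grouping of $\Gamma_2$ ratios into exactly the $\beta_{22}$ factor and the two $\beta_{21}$ factors you describe is precisely the paper's decomposition $\mathfrak{M}=\Gamma(1+\frac{s}{\tau})\,\mathfrak{M}_1\,\mathfrak{M}_{21}\,\mathfrak{M}_{22}$, the positivity conditions are handled via the same inequalities from Lemma \ref{Inequalitiies}, and the evaluation at $s_0$ uses the same dictionary \eqref{diction}, the shift identity \eqref{shift}, and the constant $\Upsilon_{\gamma/2}'(0)$ from Lemma \ref{upsprime}. The only caveat is that your Part 2 account of the quadratic prefactors ``telescoping pairwise into clean powers of $\gamma/2$'' is an accurate heuristic but slightly glosses over the bookkeeping; in the paper the mechanism is that the pairing of arguments shifted by $+s_0$ and $-s_0$ around each center produces a single overall power $(\gamma/2)^{-s_0 Q\gamma}$, which then combines with the $\tau^{-s_0/\tau}$ from \eqref{shift} and the normalization in \eqref{myfactor} to match the $\bigl(\pi\, l(\gamma^2/4)(\gamma/2)^{2-\gamma^2/2}\bigr)^{-s_0}$ prefactor in $C_\gamma$.
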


The solution in Theorem \ref{Minimal} has the limitation of being symmetric in all the $\alpha$s. The unknown
laws of  $\rho_g(\alpha_1, \alpha_2, \alpha_3, \gamma)$ corresponding to the metrics in \eqref{g+} and 
\eqref{ground} are believed to be only symmetric in $(\alpha_1, \,\alpha_3)$ for example, and 
$\rho_g(\alpha_1, \alpha_2, \alpha_3, \gamma)$ is not expected to have any particular symmetry in the $\alpha$s for a general metric. For these reasons one is interested in finding more flexible solutions. To this end we propose
a three parameter deformation of the minimal solution in Theorem \ref{Minimal}. 

Let $s_0$ be as in \eqref{s0}. Let us define the auxiliary functions depending on the deformation parameter $\rho$,
\begin{align}
\mathfrak{M}_1(s\,|\,\alpha, \gamma, \rho) = &
\frac{\Gamma_2(s+1+\tau+\rho(s-s_0)\,|\,\tau)}{\Gamma_2(1+\tau-\rho s_0\,|\,\tau)}
\frac{\Gamma_2(1+\tau+\frac{2}{\gamma}(\alpha_1-\frac{\bar{\alpha}}{2})-\rho s_0\,|\,\tau)}{\Gamma_2(s+1+\tau+\frac{2}{\gamma}(\alpha_1-\frac{\bar{\alpha}}{2})+\rho (s-s_0)\,|\,\tau)}\nonumber \\
& \times 
\frac{\Gamma_2(1+\tau+\frac{2}{\gamma}(\alpha_3-\frac{\bar{\alpha}}{2})-\rho s_0\,|\,\tau)}{\Gamma_2(s+1+\tau+\frac{2}{\gamma}(\alpha_3-\frac{\bar{\alpha}}{2})+\rho (s-s_0)\,|\,\tau)}
\frac{\Gamma_2(s+1+\tau-\frac{2}{\gamma}\alpha_2+\rho (s-s_0)\,|\,\tau)}{\Gamma_2(1+\tau-\frac{2}{\gamma}\alpha_2-\rho s_0\,|\,\tau)}
\nonumber \\
& \times 
\frac{\Gamma_2(-\rho  s+1+\tau-\frac{2}{\gamma}\alpha_2\,|\,\tau)}{\Gamma_2(1+\tau-\frac{2}{\gamma}\alpha_2\,|\,\tau)}
\frac{\Gamma_2(1+\tau+\frac{2}{\gamma}(\alpha_1-\frac{\bar{\alpha}}{2})\,|\,\tau)}{\Gamma_2(-\rho  s+1+\tau+\frac{2}{\gamma}(\alpha_1-\frac{\bar{\alpha}}{2})\,|\,\tau)} \nonumber \\
& \times 
\frac{\Gamma_2(1+\tau+\frac{2}{\gamma}(\alpha_3-\frac{\bar{\alpha}}{2})\,|\,\tau)}{\Gamma_2(-\rho  s+1+\tau+\frac{2}{\gamma}(\alpha_3-\frac{\bar{\alpha}}{2})\,|\,\tau)} 
\frac{\Gamma_2(-\rho  s+1+\tau\,|\,\tau)}{\Gamma_2(1+\tau\,|\,\tau)}, \label{aM1}
\end{align}
\begin{align}
\mathfrak{M}_{21}(s\,|\,\alpha, \gamma, \rho) =  &
\frac{\Gamma_2(s+1+\tau-\frac{2}{\gamma}\alpha_1+\rho (s-s_0)\,|\,\tau)}{\Gamma_2(1+\tau-\frac{2}{\gamma}\alpha_1-\rho s_0\,|\,\tau)}
\frac{\Gamma_2(2(1+\tau)-\frac{\bar{\alpha}}{\gamma}-\rho s_0\,|\,\tau)}{\Gamma_2(s+2(1+\tau)-\frac{\bar{\alpha}}{\gamma}+\rho (s-s_0)\,|\,\tau)}
\nonumber \\
& \times 
\frac{\Gamma_2(-\rho  s+1+\tau-\frac{2}{\gamma}\alpha_1\,|\,\tau)}{\Gamma_2(1+\tau-\frac{2}{\gamma}\alpha_1\,|\,\tau)}
\frac{\Gamma_2(2(1+\tau)-\frac{\bar{\alpha}}{\gamma}\,|\,\tau)}{\Gamma_2(-\rho  s+2(1+\tau)-\frac{\bar{\alpha}}{\gamma}\,|\,\tau)}
,\label{aM21}
\end{align}
\begin{align}
\mathfrak{M}_{22}(s\,|\,\alpha, \gamma, \rho) =  &
\frac{\Gamma_2(1+\tau+\frac{2}{\gamma}(\alpha_2-\frac{\bar{\alpha}}{2})-\rho s_0\,|\,\tau)}{\Gamma_2(s+1+\tau+\frac{2}{\gamma}(\alpha_2-\frac{\bar{\alpha}}{2})+\rho (s-s_0)\,|\,\tau)}  
\frac{\Gamma_2(s+1+\tau-\frac{2}{\gamma}\alpha_3+\rho (s-s_0)\,|\,\tau)}{\Gamma_2(1+\tau-\frac{2}{\gamma}\alpha_3-\rho s_0\,|\,\tau)}
\nonumber \\
& \times 
\frac{\Gamma_2(-\rho  s+1+\tau-\frac{2}{\gamma}\alpha_3\,|\,\tau)}{\Gamma_2(1+\tau-\frac{2}{\gamma}\alpha_3\,|\,\tau)}
\frac{\Gamma_2(1+\tau+\frac{2}{\gamma}(\alpha_2-\frac{\bar{\alpha}}{2})\,|\,\tau)}{\Gamma_2(-\rho  s+1+\tau+\frac{2}{\gamma}(\alpha_2-\frac{\bar{\alpha}}{2})\,|\,\tau)}  
. \label{aM22}
\end{align}
We now define the three parameter deformation of $\mathfrak{M}(s\,|\,\alpha, \gamma)$ by
\begin{equation}
\mathfrak{M}(s\,|\,\alpha, \gamma, \rho) = \Gamma(1+\frac{s}{\tau})  \,\mathfrak{M}_{1}(s\,|\,\alpha, \gamma, \rho_1)\,\mathfrak{M}_{21}(s\,|\,\alpha, \gamma, \rho_{21})\,\mathfrak{M}_{22}(s\,|\,\alpha, \gamma, \rho_{22}).
\end{equation}
\begin{theorem}\label{Deformation}
The function $\mathfrak{M}(s\,|\,\alpha, \gamma, \rho) $ has the following properties.
\begin{enumerate}
\item Let 
\begin{equation}
-1 < \rho_1,\,\rho_{21}, \,\rho_{22}\leq 0.
\end{equation}
The function $\mathfrak{M}(s\,|\,\alpha, \gamma, \rho) $ is analytic in $s$ over the domain specified in (\ref{mc}), (\ref{mc2}) and is the Mellin transform of a positive, log-infinitely divisible probability distribution defined as the product of independent Frechet and powers of $\beta_{22}$ and $\beta_{21}$ distributions. 
\begin{align}
\mathfrak{M}(s\,|\,\alpha, \gamma, \rho) = & \Gamma(1+\frac{s}{\tau})  \,
 {\bf E}\Bigl[\beta_{22}^{s(1+\rho_1)}\Bigl(b_0=1+\tau-\rho_1s_0, \, b_1=\frac{2}{\gamma}(\alpha_1-\frac{\bar{\alpha}}{2}), \, b_2=\frac{2}{\gamma}(\alpha_3-\frac{\bar{\alpha}}{2})
\Bigr)\Bigr]   \nonumber \\
& \times 
{\bf E}\Bigl[\beta_{22}^{-\rho_1 s}\Bigl(b_0=1+\tau-\frac{2}{\gamma}\alpha_2, \, b_1=\frac{2}{\gamma}(\frac{\bar{\alpha}}{2}-\alpha_1), \, b_2=\frac{2}{\gamma}(\frac{\bar{\alpha}}{2}-\alpha_3\Bigr)\Bigr] 
\nonumber \\
& \times 
{\bf E}\Bigl[\beta_{21}^{s(1+\rho_{21})}\Bigl(b_0=1+\tau-\frac{2}{\gamma}\alpha_1-\rho_{21}s_0, \, b_1=1+\tau+\frac{2}{\gamma}(\alpha_1-\frac{\bar{\alpha}}{2})\Bigr)\Bigr] \nonumber \\
& \times 
{\bf E}\Bigl[\beta_{21}^{-\rho_{21}s }\Bigl(b_0=1+\tau-\frac{2}{\gamma}\alpha_1, \, b_1=1+\tau+\frac{2}{\gamma}(\alpha_1-\frac{\bar{\alpha}}{2})\Bigr)\Bigr]
\nonumber \\
& \times 
{\bf E}\Bigl[\beta_{21}^{s(1+\rho_{22})}\Bigl(b_0=1+\tau-\frac{2}{\gamma}\alpha_3-\rho_{22}s_0, \, b_1=\frac{2}{\gamma}(\frac{\bar{\alpha}}{2} - \alpha_1)\Bigr)\Bigr] \nonumber \\ & \times 
{\bf E}\Bigl[\beta_{21}^{-\rho_{22} s}\Bigl(b_0=1+\tau-\frac{2}{\gamma}\alpha_3, \, b_1=\frac{2}{\gamma}(\frac{\bar{\alpha}}{2} - \alpha_1)\Bigr)\Bigr].
\end{align}
It coincides
with $\mathfrak{M}(s\,|\,\alpha, \gamma)$ in \eqref{firstM} when all the deformation parameters are zero,
\begin{equation}
\mathfrak{M}(s\,|\,\alpha, \gamma, \rho_1=0, \rho_{21}=0, \rho_{22}=0)  = 
\mathfrak{M}(s\,|\,\alpha, \gamma).
\end{equation}
and has the same value at $s=s_0,$
\begin{equation}
\mathfrak{M}(s_0\,|\,\alpha, \gamma, \rho_1, \rho_{21}, \rho_{22})  = 
\mathfrak{M}(s_0\,|\,\alpha, \gamma),
\end{equation}
so that it satisfies the DOZZ formula and the scaling invariance\footnote{
The scaling invariance is satisfied so long as $\rho_1,$ $\rho_{21},$ and $\rho_{22}$ are independent of the scale $\lambda.$ For example, if they are determined by matching some moments of $\rho_g,$ \emph{i.e.} equating \eqref{exactfirstmoment} and the corresponding moment given by our conjecture in \eqref{naivecon},
then they depend on the metric but do not change with rescaling, $\rho_i(g)=\rho_i(\lambda g)$ for any $\lambda>0$ as required by \eqref{rhogscaling}. 
}
 in (\ref{rhogscaling})
when multiplied by  $e^{s s_0 \frac{\gamma^2}{2}\chi_g} \Bigl( 
\frac{\pi \tau^{\frac{1}{\tau}} \Gamma\big(\frac{1}{\tau}\bigr)}{\Gamma\big(1-\frac{1}{\tau}\bigr)}
\Bigr)^{-s}.$
\item The function $\mathfrak{M}(s\,|\,\alpha, \gamma, \rho_1, \rho_{21}, \rho_{22}) $ 
has the following symmetry properties. If $\rho_{21}=\rho_{22},$
\begin{equation}
\mathfrak{M}(s\,|\,\alpha_1, \alpha_2, \alpha_3, \gamma, \rho_1, \rho_{21}=\rho_{22}) = \mathfrak{M}(s\,|\,\alpha_3, \alpha_2, \alpha_1, \gamma, \rho_1, \rho_{21}=\rho_{22}).
\end{equation}
If $\rho_1=\rho_{21}=\rho_{22},$
$\mathfrak{M}(s\,|\,\alpha, \gamma, \rho_1, \rho_{21}, \rho_{22}) $ is symmetric in all the $\alpha$s.
\item In the special case of $\alpha_i=0$ $\forall i$ (the total mass), we obtain the product of five independent Frechet distributions,
\begin{align}
\mathfrak{M}(s\,|\,\alpha=0, \gamma, \rho) = & \tau^{\frac{s}{\tau}}\, \Gamma(1+\frac{s}{\tau}) \frac{\Gamma\Bigl((1+\rho_{21})(s+1+\tau)\Bigr)}{\Gamma\Bigl((1+\rho_{21})(1+\tau)\Bigr)}\frac{\Gamma\Bigl(1+\frac{(1+\rho_{21})(s+1+\tau)}{\tau}\Bigr)}{\Gamma\Bigl(1+\frac{(1+\rho_{21})(1+\tau)}{\tau}\Bigr)}
 \nonumber \\
& \times 
 \frac{\Gamma\bigl(-\rho_{21}s+1+\tau\bigr)}{\Gamma\bigl(1+\tau\bigr)}
\frac{\Gamma\Bigl(1+\frac{(-\rho_{21}s+1+\tau)}{\tau}\Bigr)}{\Gamma\Bigl(1+\frac{1+\tau}{\tau}\Bigr)}
. \label{atotalmass}
\end{align}
\item The leading asymptotic term of the Mellin transform is
\begin{equation}\label{mellinasymptotic}
\mathfrak{M}(s\,|\, \alpha,\gamma, \rho) \thicksim \exp\Bigl(\bigl(1+\frac{2}{\tau}\bigr) s\log s +O(s)\Bigr), \; \Re(s)\rightarrow +\infty.
\end{equation}
\end{enumerate}
\end{theorem}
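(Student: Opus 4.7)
The plan is to treat the theorem statement-by-statement, with the core technical observation being that each auxiliary factor $\mathfrak{M}_1, \mathfrak{M}_{21}, \mathfrak{M}_{22}$ has been engineered as a product of two Barnes beta Mellin transforms arranged so that a pair of $\Gamma_2$ factors cancels exactly at $s=s_0$, reducing the deformed expression to the undeformed one. I would first recognize that $\mathfrak{M}_1(s|\alpha,\gamma,\rho_1)$ splits into two blocks of four $\Gamma_2$-ratios: the first four match \eqref{barnesbeta22} for $\beta_{22}(1+\tau-\rho_1 s_0,\, \tfrac{2}{\gamma}(\alpha_1-\tfrac{\bar\alpha}{2}),\, \tfrac{2}{\gamma}(\alpha_3-\tfrac{\bar\alpha}{2}))$ evaluated at power $s(1+\rho_1)$ (using $b_0+b_1+b_2 = 1+\tau-\rho_1 s_0-\tfrac{2}{\gamma}\alpha_2$), while the last four match $\beta_{22}(1+\tau-\tfrac{2}{\gamma}\alpha_2,\, \tfrac{2}{\gamma}(\tfrac{\bar\alpha}{2}-\alpha_1),\, \tfrac{2}{\gamma}(\tfrac{\bar\alpha}{2}-\alpha_3))$ at power $-\rho_1 s$; an identical two-block split works for $\mathfrak{M}_{21}$ and $\mathfrak{M}_{22}$ as pairs of $\beta_{21}$'s. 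The parameter restriction $-1<\rho_i\le 0$ is used to verify that every $b_0, b_0+b_j$ and $b_1 b_2$ in the Barnes beta hypotheses remains positive; once this is established, the probabilistic interpretation as an independent product, log-infinite divisibility, and analyticity of $\mathfrak{M}(s|\alpha,\gamma,\rho)$ over \eqref{mc}, \eqref{mc2} follow immediately from Section~3, and the reduction to $\mathfrak{M}(s|\alpha,\gamma)$ at $\rho=0$ is by inspection.

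For the value at $s=s_0$ I would use the cancellation pattern directly: when $s=s_0$, every term of the form $\rho_i(s-s_0)$ vanishes in the ``deformed'' block, while the ``compensating'' block carries factors with argument $-\rho_i s_0$ that pair off one-for-one with the denominators of the deformed block. After cancellation, each $\mathfrak{M}_\bullet(s_0|\alpha,\gamma,\rho_\bullet)$ collapses to the corresponding Barnes beta factor appearing in Theorem~\ref{Minimal}, so $\mathfrak{M}(s_0|\alpha,\gamma,\rho)=\mathfrak{M}(s_0|\alpha,\gamma)$, and \eqref{mDOZZ} then follows from part~2 of Theorem~\ref{Minimal}. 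The scaling invariance under $g\mapsto \lambda g$ is a one-line check from \eqref{rhogscaling}: the extra factor $\lambda^{s s_0 \gamma^2/4}$ is exactly what the exponential prefactor $e^{s s_0 \gamma^2 \chi_g/2}$ produces under $\chi_{\lambda g}=\chi_g + \tfrac{1}{2}\log\lambda$.

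For the symmetry claims, I would observe by direct inspection that $\mathfrak{M}_1(s|\alpha,\gamma,\rho_1)$ is manifestly symmetric in $\alpha_1\leftrightarrow \alpha_3$ (the $\alpha_1$- and $\alpha_3$-dependent $\Gamma_2$ factors appear in identical pairs), whereas the exchange $\alpha_1\leftrightarrow \alpha_3$ maps $\mathfrak{M}_{21}(\cdot,\rho_{21})$ to $\mathfrak{M}_{22}(\cdot,\rho_{21})$ and vice versa, giving $(\alpha_1,\alpha_3)$-symmetry exactly when $\rho_{21}=\rho_{22}$. Full $\mathfrak{S}_3$-symmetry when $\rho_1=\rho_{21}=\rho_{22}$ is then checked by exhibiting one additional transposition, say $\alpha_2\leftrightarrow\alpha_3$, as a rearrangement of the combined $\Gamma_2$-list. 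For part~3 I would set $\alpha_i=0$ (so $\bar\alpha=0$ and $s_0=-(1+\tau)$): the $\beta_{22}$ factors collapse because their $b_1, b_2$ vanish, and the surviving $\beta_{21}$-type $\Gamma_2$-ratios are reduced to ordinary Gammas via the functional equations \eqref{doublefunceq1}--\eqref{doublefunceq2}, yielding \eqref{atotalmass} as a product of five generalized Frechet Mellin transforms. For part~4, the $\beta_{22}$ factors contribute only $s^{-b_1 b_2/\tau}$ and are sub-leading; summing the $\beta_{21}$ asymptotics \eqref{beta21asym} over the four blocks gives $\tau^{-1}(1+\tau)$ after the $\rho_{21}$- and $\rho_{22}$-dependent coefficients telescope, and adding $1/\tau$ from the Frechet factor produces the advertised $(1+2/\tau)s\log s$.

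I expect the main obstacle to be bookkeeping: part~1 hinges on correctly identifying twelve Barnes beta Mellin factors inside the three $\mathfrak{M}_\bullet$ expressions, verifying the positivity of every $b_0+b_j$ uniformly in $\rho_i\in(-1,0]$ and in $\alpha$s satisfying \eqref{bin1} and the $s_0$ admissibility from \eqref{mc}--\eqref{mc2}, and then tracking the eight-factor cancellation that forces $\mathfrak{M}(s_0|\alpha,\gamma,\rho)=\mathfrak{M}(s_0|\alpha,\gamma)$. All of this is elementary but error-prone, so I would do the verification carefully for $\mathfrak{M}_1$ first and then appeal to the parallel structure for $\mathfrak{M}_{21},\mathfrak{M}_{22}$.
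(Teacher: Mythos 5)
Your proposal is correct and follows essentially the same route as the paper: you identify the same twelve Barnes beta Mellin blocks (with the identical $b$-parameters), establish positivity uniformly in $\rho\in(-1,0]$ via the same inequalities, observe the same cancellation of the $-\rho s_0$ compensating factors against the deformed block at $s=s_0$, and derive parts 3 and 4 by the same functional equation and Stirling arguments. The only organizational difference is that the paper packages the ``$s_0$-preserving deformation'' step into standalone Lemmas~\ref{deformation21} and \ref{deformation22} for a single $\beta_{21}$ or $\beta_{22}$ and then applies them to the three factors, whereas you verify the cancellation directly inside the combined expression; this is a matter of presentation, not substance.
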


Thus, we have constructed a family of probability distributions that satisfy the DOZZ formula and have the desired symmetry and scaling properties. 

We end this section with the special case of $s_0=0$ $(\bar{\alpha}=2Q).$ In this case we provide a modification of the minimal construction that has a different leading asymptotic of the Mellin transform. 
\begin{theorem}\label{Minimals0}
Let $s_0=0.$ Let $\mathfrak{M}(s\,|\,\alpha, \gamma) $ denote the minimal solution in \eqref{firstM} and define the function 
\begin{equation}
\mathfrak{M}'(s\,|\,\alpha, \gamma) = 
\frac{\Gamma_2(s+2(1+\tau)-\frac{2}{\gamma}\alpha_3\,|\,\tau)}{\Gamma_2(2(1+\tau)-\frac{2}{\gamma}\alpha_3\,|\,\tau)}
\frac{\Gamma_2(1+\tau-\frac{2}{\gamma}\alpha_3\,|\,\tau)}{\Gamma_2(s+1+\tau-\frac{2}{\gamma}\alpha_3\,|\,\tau)}
\mathfrak{M}(s\,|\,\alpha, \gamma).\label{firstMs0}
\end{equation}
Then, $\mathfrak{M}'(s\,|\,\alpha, \gamma)$ is analytic in $s$ over the domain specified in (\ref{mc}), (\ref{mc2}), 
satisfies the DOZZ formula in the sense of \eqref{myfactor} and \eqref{mDOZZ}, and is the Mellin transform of a positive, log-infinitely divisible probability distribution defined as the product of independent Frechet and two  $\beta_{22}$ distributions. 
\begin{align}
\mathfrak{M}'(s\,|\,\alpha, \gamma) = & \Gamma(1+\frac{s}{\tau}) \,
{\bf E}\Bigl[\beta_{22}^{s}\Bigl(b_0=1+\tau, \, b_1=\frac{2}{\gamma}(\alpha_1-\frac{\bar{\alpha}}{2}), \, b_2=
\frac{2}{\gamma}(\alpha_3-\frac{\bar{\alpha}}{2})\Bigr)\Bigr] \nonumber \\ \times &
{\bf E}\Bigl[\beta_{22}^{s}\Bigl(b_0=1+\tau-\frac{2}{\gamma}\alpha_1, \, b_1=1+\tau+\frac{2}{\gamma}(\alpha_1-\frac{\bar{\alpha}}{2}), 
b_2=  \frac{2}{\gamma}(\frac{\bar{\alpha}}{2} -  \alpha_3) \Bigr)\Bigr].
\end{align}
The leading asymptotic term of the Mellin transform is
\begin{equation}
\mathfrak{M}'(s\,|\, \alpha,\gamma, \rho) \thicksim \exp\Bigl(\frac{s}{\tau} \log s +O(s)\Bigr), \; \Re(s)\rightarrow +\infty.
\end{equation}
\end{theorem}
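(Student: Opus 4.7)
My plan is to reduce everything to the corresponding facts for the minimal solution $\mathfrak{M}(s|\alpha,\gamma)$ already established in Theorem \ref{Minimal}, taking advantage of the drastic simplifications that $s_0=0$ introduces. Under $\bar{\alpha}=2Q$ one has $\bar{\alpha}/\gamma = 1+\tau$ and $1+\tau + \frac{2}{\gamma}(\alpha_i-\frac{\bar{\alpha}}{2}) = \frac{2\alpha_i}{\gamma}$, so the central pair of $\Gamma_2$ factors in \eqref{firstM} telescopes away and each $i=1,2,3$ block in the product acquires a clean symmetric form in $\alpha_i$. My first step is to write $\mathfrak{M}(s|\alpha,\gamma)|_{s_0=0}$ in this reduced form and observe that the extra ratio prepended in \eqref{firstMs0} cancels exactly the $i=3$ block, leaving only the $i=1,2$ blocks together with a new $\Gamma_2$-ratio involving $2(1+\tau)-\frac{2\alpha_3}{\gamma}$.

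The second step is to match the resulting expression against the Mellin transforms of the two $\beta_{22}$ factors stated in the theorem. The first, $\beta_{22}(1+\tau,\, \frac{2}{\gamma}(\alpha_1-Q),\, \frac{2}{\gamma}(\alpha_3-Q))$, is identical to the one already appearing in Theorem \ref{Minimal}; the second, $\beta_{22}(1+\tau-\frac{2\alpha_1}{\gamma},\, \frac{2\alpha_1}{\gamma},\, 1+\tau-\frac{2\alpha_3}{\gamma})$, is what the two $\beta_{21}$ factors of Theorem \ref{Minimal} fuse into once multiplied by the extra ratio. The identity $\bar{\alpha}=2Q$ is essential here: it is precisely what forces $b_0+b_2 = \frac{2\alpha_2}{\gamma}$ in the new $\beta_{22}$ and what makes the cross-terms $\Gamma_2(s+1+\tau|\tau)/\Gamma_2(1+\tau|\tau)$ cancel after the rearrangement. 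The admissibility conditions of \eqref{barnesbeta22} then reduce to $0<\alpha_i<Q$, with $b_1b_2>0$ holding because both $b_i$'s of the first factor are negative while both of the second are positive. Log-infinite divisibility of the product follows from that of the individual Frechet and $\beta_{22}$ factors.

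With the factorization in place, the remaining properties are short consequences. Analyticity on the domain \eqref{mc}--\eqref{mc2} is a pole check: the only relevant poles are $s=-\tau$ from the Frechet, $s = \frac{2\alpha_i}{\gamma}-(1+\tau)$ ($i=1,2$) from the $\Gamma_2$ numerators in the two blocks, and $s = \frac{2\alpha_3}{\gamma}-2(1+\tau)<-\tau$ from the new $\Gamma_2$ in the numerator; none of these lies strictly inside the half-plane specified by \eqref{mc}--\eqref{mc2}. The DOZZ identity \eqref{mDOZZ} is immediate because the auxiliary ratio in \eqref{firstMs0} is manifestly $1$ at $s=s_0=0$, so $\mathfrak{M}'(s_0|\alpha,\gamma)=\mathfrak{M}(s_0|\alpha,\gamma)$ and Theorem \ref{Minimal}(2) transfers the identity verbatim. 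The leading Mellin asymptotic follows from Stirling applied to the Frechet factor, giving $\frac{1}{\tau}s\log s + O(s)$, while each $\beta_{22}$ Mellin has only polynomial growth $\thicksim s^{-b_1b_2/\tau}$ contributing $O(\log s)$.

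The main obstacle, in my view, is the combinatorial bookkeeping of the first two steps: one must recognize that the particular ratio of $\Gamma_2$'s introduced in \eqref{firstMs0} is designed precisely to fuse the two $\beta_{21}$ factors of Theorem \ref{Minimal} into a single admissible $\beta_{22}$ while leaving the original $\beta_{22}$ untouched. This closes only because $\bar{\alpha}=2Q$; away from the $s_0=0$ locus the matching of $b_0+b_2$ with $\frac{2\alpha_2}{\gamma}$ and the sign conditions $b_1b_2>0$ for the new factor would both fail, which is why Theorem \ref{Minimals0} is stated only in the $s_0=0$ case.
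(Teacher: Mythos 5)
Your proposal is correct and follows essentially the paper's own route: the paper likewise keeps the first factor $\mathfrak{M}_1$ (the $\beta_{22}$ of Theorem \ref{Minimal}) untouched and observes that the prefactor in \eqref{firstMs0} fuses the two $\beta_{21}$ factors of the minimal solution into the single $\beta_{22}\bigl(b_0=1+\tau-\frac{2}{\gamma}\alpha_1,\, b_1=1+\tau+\frac{2}{\gamma}(\alpha_1-\frac{\bar{\alpha}}{2}),\, b_2=\frac{2}{\gamma}(\frac{\bar{\alpha}}{2}-\alpha_3)\bigr)$, with the DOZZ value preserved because the prefactor equals $1$ at $s=s_0=0$ and the asymptotic obtained from Stirling applied to the Frechet factor, exactly as you argue. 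The only inaccuracy is your closing remark: the admissibility conditions for the fused $\beta_{22}$ (in particular $b_1>0$, $b_2>0$, hence $b_1b_2>0$) hold for general admissible $(\alpha, s_0)$ by \eqref{eqcondit3} and \eqref{bin2} --- the paper's proof checks them without using $\bar{\alpha}=2Q$ --- so what fails away from $s_0=0$ is not the probabilistic structure but only the agreement with the DOZZ value at $s=s_0$, since the prefactor is then no longer $1$ at that point.
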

The modified minimal construction is symmetric in $(\alpha_1,\alpha_2).$ It is obvious that by changing the index of $\alpha_i$ in the pre-factor in front of 
 $\mathfrak{M}(s\,|\,\alpha, \gamma) $ in \eqref{firstMs0} one obtains solutions that are symmetric in any given pair of $\alpha_i$s. 

\section{Proofs}

We begin by analyzing the natural boundaries in \eqref{mc} and \eqref{mc2} and the implications of $s_0$ satisfying these conditions in detail. 
\begin{lemma}\label{Inequalitiies}
Let $s$ and $s_0$ satisfy \eqref{mc} and \eqref{mc2}, $\alpha_i$ satisfy \eqref{bin1} $\forall i.$  Then, 
the following inequalities hold for the $\alpha$s:
\begin{gather}
1+\tau-\frac{2\alpha_k}{\gamma}>0, \label{bin3} \\
\bar{\alpha}>2\alpha_k, \label{bin2} \\
2Q+\alpha_k-\bar{\alpha}>0, \label{in1}\\
\alpha_k>0,  \label{in2} \\
Q+\alpha_k-\frac{\bar{\alpha}}{2}>0,  \label{in3} \\
1+\tau+\frac{2}{\gamma}(\alpha_k-\frac{\bar{\alpha}}{2})>0, \label{eqcondit3} \\
1<  \frac{\bar{\alpha}}{\gamma}<\frac{3}{2}(1+\tau), \label{alphagamma2}
\end{gather}
and the following inequalities\footnote{To ease notation we write $s$ instead of $\Re(s)$ here and throughout this section.} hold for $s:$
\begin{gather}
s+ 1 + \tau - \frac{2\alpha_k}{\gamma} > 0,\label{sin} \\
s+ 1+\tau + \frac{2}{\gamma} (\alpha_k-\frac{\bar{\alpha}}{2}) >0, \label{sadd} \\
s+ 1 + \tau - \frac{2\bar{\alpha}}{3\gamma} >0, \label{sadd2} \\
s +  2(1+\tau) - \frac{\bar{\alpha}}{\gamma} > 0, \label{sadd3}
\end{gather}
and the following inequalities hold for $s_0:$
\begin{gather}
s_0 < \frac{1+\tau}{2}, \label{alphagamma} \\
s_0 < \frac{\alpha_k}{\gamma}. \label{gamma}
\end{gather}
\end{lemma}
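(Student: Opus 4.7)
The plan is to translate every inequality into a linear statement in $\alpha_k$, $\bar{\alpha}$, $Q$, and (after clearing $\gamma$) $s$ or $s_0$, using only the single algebraic identity $1+\tau=2Q/\gamma$ together with the definition $\gamma s_0=\bar{\alpha}-2Q$. Once everything is in this common form, each item on the list is a short linear consequence of the three hypotheses $\alpha_k<Q$, $s,s_0>-\tau$, and $s,s_0>2(\alpha_k-Q)/\gamma$ (for every $k$), sometimes applied at several different indices.

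First I would dispatch the $\alpha$- and $s_0$-inequalities. From $\alpha_k<Q$ one immediately gets \eqref{bin3}; summing the three instances of $\alpha_i<Q$ produces the upper bound in \eqref{alphagamma2}, and applying $s_0>-\tau$ with $\gamma s_0=\bar{\alpha}-2Q$ produces its lower bound. The bound $\alpha_i+\alpha_j<2Q$ then gives \eqref{gamma}, and $\bar{\alpha}<3Q$ gives \eqref{alphagamma}. Feeding \eqref{mc2} with $s$ replaced by $s_0$ produces $\bar{\alpha}>2\alpha_k$, which is \eqref{bin2}; rewritten, this is the triangle inequality $\alpha_i+\alpha_j>\alpha_k$ for the other two indices. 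Combined with $\alpha_j<Q$ this yields \eqref{in1} and \eqref{in3}, and \eqref{eqcondit3} is literally $2/\gamma$ times \eqref{in3} after substituting $1+\tau=2Q/\gamma$. The one nontrivial step here is the strict positivity \eqref{in2}, which I would prove by contradiction: order $\alpha_1\le\alpha_2\le\alpha_3$ and suppose $\alpha_1\le 0$. The triangle inequalities $\alpha_1+\alpha_2>\alpha_3$ and $\alpha_1+\alpha_3>\alpha_2$ just derived then give $\alpha_2>\alpha_3-\alpha_1\ge\alpha_3$ and $\alpha_3>\alpha_2-\alpha_1\ge\alpha_2$ simultaneously, a contradiction. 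So all three $\alpha_k$ must be strictly positive. This is the most delicate point in the lemma.

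The $s$-inequalities are then quick. \eqref{sin} is literally \eqref{mc2}; \eqref{sadd2} is \eqref{sin} at the maximal $\alpha_k$ combined with $\alpha_{\max}\ge\bar{\alpha}/3$; \eqref{sadd3} uses \eqref{sin} at $\alpha_{\max}$ with the elementary chain $2\alpha_{\max}\ge 2\bar{\alpha}/3\ge\bar{\alpha}-2Q$ (the second step using $\bar{\alpha}<3Q$). The trickiest is \eqref{sadd}, which after clearing denominators reads $\gamma(s-s_0)>-2\alpha_k$. For this I would apply \eqref{mc2} to $s$ at an index $j\ne k$ to obtain $\gamma s>2\alpha_j-2Q$, and then invoke \eqref{bin2} at the remaining index $m$ (so $\{j,k,m\}=\{1,2,3\}$) in the form $\alpha_j+\alpha_k>\alpha_m$ in order to check that the bound from \eqref{mc2} at $j$ already implies the bound needed by \eqref{sadd} at $k$. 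This cross-use of \eqref{mc2} at a non-matching index is the only place where the choice of index truly matters, and is where I expect the main bookkeeping obstacle to lie.
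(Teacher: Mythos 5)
Your proposal is correct and follows essentially the same route as the paper: everything is reduced to linear inequalities via $\frac{\gamma}{2}(1+\tau)=Q$ and $\gamma s_0=\bar{\alpha}-2Q$, with \eqref{bin2} obtained from \eqref{mc2} applied to $s_0$, positivity of the $\alpha_k$ from the resulting triangle inequalities, and \eqref{sadd}, \eqref{sadd2}, \eqref{sadd3} from \eqref{mc2}/\eqref{sin} used at a different index together with \eqref{bin2} and $\bar{\alpha}<3Q$. Your minor variations (the ordering-and-contradiction argument for $\alpha_k>0$, the use of $\alpha_{\max}\ge\bar{\alpha}/3$ in place of averaging) are cosmetic and all check out.
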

\begin{proof}
Let $(i, j, k)$ denote an arbitrary permutation of $(1, 2, 3).$
Observe the identity
\begin{equation}
\frac{\gamma}{2}(1+\tau) = Q.\label{Q}
\end{equation}
We note first that \eqref{bin3} is equivalent to \eqref{bin1} by \eqref{Q}. \eqref{bin2} follows immediately from $s_0$ satisfying \eqref{mc2}. Now,
\begin{align}
2\alpha_k-\bar{\alpha} & = 2\alpha_k - (\alpha_i+\alpha_j+\alpha_k), \\
& = \alpha_k-(\alpha_i+\alpha_j), \\
& >\alpha_k-2Q\; 
\end{align}
due to \eqref{bin1}.
This verifies \eqref{in1}. Next, 
\begin{align}
2\alpha_k-\bar{\alpha} & = 2\alpha_k - (\alpha_i+\alpha_j+\alpha_k), \\
& = \alpha_k - (\alpha_i+\alpha_j), \\
& > \alpha_k - \bar{\alpha}
\end{align}
due to \eqref{bin2}.
This verifies \eqref{in2}. To verify \eqref{in3},
\begin{align}
Q-\frac{\bar{\alpha}}{2}+\alpha_k &= \frac{\bar{\alpha}}{2} + Q - (\alpha_i+\alpha_j), \\
& = (\frac{\bar{\alpha}}{2} -\alpha_i ) + (Q-\alpha_j) >0
\end{align}
due to \eqref{bin1} and \eqref{bin2}. It also follows directly from \eqref{in1} and \eqref{in2}. \eqref{eqcondit3}
is equivalent to \eqref{in3} by means of \eqref{Q}. Finally, \eqref{alphagamma2} follows from $s_0$ satisfying \eqref{mc} and
$\bar{\alpha}<3Q.$

The inequality in \eqref{sin} is equivalent to \eqref{mc2}.
To verify \eqref{sadd},
\begin{align}
s+ 1+\tau + \frac{2}{\gamma} (\alpha_k-\frac{\bar{\alpha}}{2})  & = s+ 1+\tau + \frac{2}{\gamma} (\bar{\alpha}-\alpha_i-\alpha_j-\frac{\bar{\alpha}}{2}), \\
& = s+ 1+\tau - \frac{2\alpha_i}{\gamma}  + \frac{2}{\gamma} (\frac{\bar{\alpha}}{2}-\alpha_j) >0
\end{align}
by (\ref{bin2}) and (\ref{sin}). (\ref{sadd2}) follows from (\ref{sin}) and the definition of $\bar{\alpha}.$
To verify \eqref{sadd3}, 
\begin{equation}
 s - \frac{\bar{\alpha}}{\gamma} + 2(1+\tau) =  \bigl(s  + 1+\tau - \frac{2\bar{\alpha}}{3\gamma}\bigr)  + \bigl(1+\tau - \frac{\bar{\alpha}}{3\gamma}\bigr) >0
\end{equation}
by (\ref{sadd2}) and (\ref{alphagamma2}).

The inequality in \eqref{alphagamma} follows from \eqref{alphagamma2}. \eqref{gamma} follows from  (\ref{in1}). 
\qed
\end{proof}

The next auxiliary result that we need is the computation of the derivative of the Upsilon function in the DOZZ formula.
\begin{lemma}\label{upsprime}
\begin{equation}
\Upsilon_{\frac{\gamma}{2}}' (0) = \frac{2\pi}{\Gamma^2_{\frac{\gamma}{2}}(Q)}.
\end{equation}
\end{lemma}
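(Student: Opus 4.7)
The plan is to reduce the claim to the pole expansion of $\Gamma_2$ at the origin together with the single functional equation \eqref{doublefunceq1} evaluated at $s=0$. Since $\Gamma_{\frac{\gamma}{2}}$ has a simple pole at $x=0$ (via the dictionary \eqref{diction} and the pole of $\Gamma_2(\cdot|\tau)$ at $0$) while $\Gamma_{\frac{\gamma}{2}}(Q)$ is finite and nonzero, we have $\Upsilon_{\frac{\gamma}{2}}(0)=0$, and therefore
$$\Upsilon'_{\frac{\gamma}{2}}(0) \;=\; \lim_{x\to 0} \frac{\Upsilon_{\frac{\gamma}{2}}(x)}{x} \;=\; \frac{1}{\Gamma_{\frac{\gamma}{2}}(Q)} \, \lim_{x\to 0}\frac{1}{x\,\Gamma_{\frac{\gamma}{2}}(x)}.$$

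First I would use the dictionary \eqref{diction}, noting that $2Q/\gamma = 1+\tau$, to write $\Gamma_{\frac{\gamma}{2}}(x) = (2/\gamma)^{(x-Q/2)^2/2}\,\Gamma_2(2x/\gamma|\tau)/\Gamma_2(Q/\gamma|\tau)$, and evaluate the same expression at $x=Q$ to get $\Gamma_{\frac{\gamma}{2}}(Q) = (2/\gamma)^{Q^2/8}\,\Gamma_2(1+\tau|\tau)/\Gamma_2(Q/\gamma|\tau)$. Next I would extract the pole of $\Gamma_2$ at the origin from the functional equation \eqref{doublefunceq2}: writing $\Gamma_2(s|\tau) = \Gamma(s)\Gamma_2(s+\tau|\tau)/\sqrt{2\pi}$ and letting $s = 2x/\gamma \to 0$ gives
$$x\,\Gamma_{\frac{\gamma}{2}}(x) \;\longrightarrow\; (2/\gamma)^{Q^2/8} \cdot \frac{\gamma\,\Gamma_2(\tau|\tau)}{2\sqrt{2\pi}\,\Gamma_2(Q/\gamma|\tau)}.$$

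Substituting both expressions into the formula for $\Upsilon'_{\frac{\gamma}{2}}(0)$, the factors $(2/\gamma)^{Q^2/8}$ and $\Gamma_2(Q/\gamma|\tau)$ rearrange so that the claim $\Upsilon'_{\frac{\gamma}{2}}(0) = 2\pi/\Gamma^2_{\frac{\gamma}{2}}(Q)$ reduces to the single identity $\Gamma_2(1+\tau|\tau)/\Gamma_2(\tau|\tau) = \gamma\sqrt{\pi/2}$. This is where I would use \eqref{doublefunceq1} at $s=0$, which directly yields $\Gamma_2(\tau|\tau)/\Gamma_2(1+\tau|\tau) = \sqrt{\tau/(2\pi)}$; inverting and using $\tau = 4/\gamma^2$ gives exactly the required value.

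There is no real analytic obstacle here; the only place that requires care is tracking the constants $(2/\gamma)^{Q^2/8}$ and the $\sqrt{2\pi}$ factors coming from \eqref{doublefunceq1}--\eqref{doublefunceq2} so that everything cancels cleanly against $\tau = 4/\gamma^2$.
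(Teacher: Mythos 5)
Your proof is correct, and it is essentially the same argument as the paper's: both compute the residue of the simple pole of $\Gamma_{\frac{\gamma}{2}}$ at $0$ via the dictionary \eqref{diction} and the functional equations \eqref{doublefunceq1}--\eqref{doublefunceq2}, reducing the result to the value of the ratio $\Gamma_2(1+\tau|\tau)/\Gamma_2(\tau|\tau)$. The only (cosmetic) difference is in organization: the paper first multiplies the two functional equations into the single shift-by-$(1+\tau)$ relation \eqref{shift} and then reads off the constant from a log-expansion of $\Gamma_2(s|\tau)$ near $s=0$, whereas you apply \eqref{doublefunceq2} alone to extract the residue $\Gamma_2(\tau|\tau)/\sqrt{2\pi}$ and then separately use \eqref{doublefunceq1} at $s=0$ to evaluate $\Gamma_2(1+\tau|\tau)/\Gamma_2(\tau|\tau)=\sqrt{2\pi/\tau}$. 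Both routes yield the same cancellations, and your bookkeeping of the factors $(2/\gamma)^{Q^2/8}$, $\Gamma_2(Q/\gamma|\tau)$, $\sqrt{2\pi}$, and $\tau=4/\gamma^2$ checks out.
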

\begin{proof}
The proof of this formula is a corollary of the functional equations of the double and Euler's gamma functions. We start with the definition of the Upsilon function
in (\ref{ups}),
\begin{equation}
\Upsilon_{\frac{\gamma}{2}} (x) = \frac{1}{\Gamma_{\frac{\gamma}{2}}(x)\,\Gamma_{\frac{\gamma}{2}}(Q-x)}.
\end{equation}
The singularity at $x=0$ comes from the first factor. It is sufficient to show that
\begin{equation}
\log \Gamma_{\frac{\gamma}{2}}(x) =  -\log x + C + O(x), \; x\rightarrow 0, \label{mainest}
\end{equation}
for some constant $C.$
Then,
\begin{equation}
\Upsilon_{\frac{\gamma}{2}}' (0) = \frac{e^{-C}}{\Gamma_{\frac{\gamma}{2}}(Q)}.
\end{equation}
We will now show that (\ref{mainest}) holds with 
\begin{equation}
C = \log \Gamma_{\frac{\gamma}{2}}(Q) - \log 2\pi.
\end{equation}
Multiplying together the functional equations of the double gamma function, cf. \eqref{doublefunceq1} and \eqref{doublefunceq2}, we obtain
\begin{equation}\label{shift}
\frac{\Gamma_2(s|\tau)}{\Gamma_2(s+\tau+1|\tau)} =  \frac{\tau^{\frac{s}{\tau}+\frac{1}{2}}}{2\pi}\, \Gamma(s)  \Gamma\bigl(\frac{s+\tau}{\tau}\bigr).
\end{equation}
By taking the log of this equation and using the functional equation of Euler's gamma function, we get
\begin{equation}
\log\Gamma_2(s|\tau) =  \log\Gamma_2(s+\tau+1|\tau) +  \log\frac{\tau^{\frac{s}{\tau}+\frac{1}{2}}}{2\pi} +\log\Gamma(s+1) -\log(s) +\log\Gamma\bigl(\frac{s+\tau}{\tau}\bigr).
\end{equation}
Thus,
\begin{equation}
\log\Gamma_2(s|\tau) = -\log s +  \log\frac{\tau^{\frac{1}{2}}}{2\pi} + \log\Gamma_2(\tau+1|\tau)  + O(s), \; s\rightarrow 0.
\end{equation}
It now remains to translate this into the equivalent asymptotic for $\log \Gamma_{\frac{\gamma}{2}}(x).$ Recalling (\ref{diction}),
\begin{align}
\log\Gamma_{\frac{\gamma}{2}}(x) = & \log(\frac{2}{\gamma})^{\frac{1}{2} (x-\frac{Q}{2})^2} + \log\Gamma_2(\frac{2x}{\gamma}|\tau) - \log\Gamma_2(\frac{Q}{\gamma}|\tau), \nonumber \\
= & \log(\frac{2}{\gamma})^{\frac{1}{2} (x-\frac{Q}{2})^2}  - \log \frac{2x}{\gamma} +  \log\frac{\tau^{\frac{1}{2}}}{2\pi} + \log\Gamma_2(\tau+1|\tau)   - \log\Gamma_2(\frac{Q}{\gamma}|\tau) +O(x).
\end{align}
On the other hand,
\begin{equation}
\log\Gamma_{\frac{\gamma}{2}}(Q) =  \log(\frac{2}{\gamma})^{\frac{1}{2} (Q-\frac{Q}{2})^2} + \log\Gamma_2(1+\tau|\tau) - \log\Gamma_2(\frac{Q}{\gamma}|\tau).
\end{equation}
The result now follows from recalling the definition of $\tau$ in (\ref{taudef}). \qed
\end{proof}

We can now give the proof of Theorem \ref{Minimal}.
\begin{proof}

We note first that the real part of the arguments of the double gamma factors that enter \eqref{firstM} 
is positive for each double gamma factor due to \eqref{mc}, \eqref{mc2}, \eqref{bin1} or their corollaries in
Lemma \ref{Inequalitiies}. Hence, $\mathfrak{M}(s\,|\,\alpha, \gamma) $ is analytic over the domain specified in
\eqref{mc}, \eqref{mc2}. 

Next, we split the expression in \eqref{firstM} into two groups
of factors, which we denote by $\mathfrak{M}_1(s\,|\,\alpha, \gamma)$ and $\mathfrak{M}_2(s\,|\,\alpha, \gamma),$
\begin{align}
\mathfrak{M}_1(s\,|\,\alpha, \gamma) = &
\frac{\Gamma_2(s+1+\tau\,|\,\tau)}{\Gamma_2(1+\tau\,|\,\tau)}
\frac{\Gamma_2(1+\tau+\frac{2}{\gamma}(\alpha_1-\frac{\bar{\alpha}}{2})\,|\,\tau)}{\Gamma_2(s+1+\tau+\frac{2}{\gamma}(\alpha_1-\frac{\bar{\alpha}}{2})\,|\,\tau)}
\frac{\Gamma_2(1+\tau+\frac{2}{\gamma}(\alpha_3-\frac{\bar{\alpha}}{2})\,|\,\tau)}{\Gamma_2(s+1+\tau+\frac{2}{\gamma}(\alpha_3-\frac{\bar{\alpha}}{2})\,|\,\tau)}\nonumber \\
& \times 
\frac{\Gamma_2(s+1+\tau-\frac{2}{\gamma}\alpha_2\,|\,\tau)}{\Gamma_2(1+\tau-\frac{2}{\gamma}\alpha_2\,|\,\tau)}, \label{M1}
\end{align}
\begin{align}
\mathfrak{M}_2(s\,|\,\alpha, \gamma) =  &
\frac{\Gamma_2(s+1+\tau-\frac{2}{\gamma}\alpha_1\,|\,\tau)}{\Gamma_2(1+\tau-\frac{2}{\gamma}\alpha_1\,|\,\tau)}
\frac{\Gamma_2(2(1+\tau)-\frac{\bar{\alpha}}{\gamma}\,|\,\tau)}{\Gamma_2(s+2(1+\tau)-\frac{\bar{\alpha}}{\gamma}\,|\,\tau)}
\frac{\Gamma_2(1+\tau+\frac{2}{\gamma}(\alpha_2-\frac{\bar{\alpha}}{2})\,|\,\tau)}{\Gamma_2(s+1+\tau+\frac{2}{\gamma}(\alpha_2-\frac{\bar{\alpha}}{2})\,|\,\tau)}\nonumber \\
& \times
\frac{\Gamma_2(s+1+\tau-\frac{2}{\gamma}\alpha_3\,|\,\tau)}{\Gamma_2(1+\tau-\frac{2}{\gamma}\alpha_3\,|\,\tau)},
\label{M2}
\end{align}
so that
\begin{equation}\label{MM1M2}
\mathfrak{M}(s\,|\,\alpha, \gamma) = \Gamma(1+\frac{s}{\tau})  \,\mathfrak{M}_1(s\,|\,\alpha, \gamma) \,\mathfrak{M}_2(s\,|\,\alpha, \gamma).
\end{equation}
It is sufficient to show that $\mathfrak{M}_1(s\,|\,\alpha, \gamma)$ and $\mathfrak{M}_2(s\,|\,\alpha, \gamma)$ are Mellin transforms of positive, log-infinitely
divisible distributions. For $\mathfrak{M}_1(s\,|\,\alpha, \gamma)$ this follows directly from \eqref{barnesbeta22}. Denote
\begin{align}
b_0 & = 1+\tau, \\
b_1 & = \frac{2}{\gamma}(\alpha_1-\frac{\bar{\alpha}}{2}), \\
b_2 & = \frac{2}{\gamma}(\alpha_3-\frac{\bar{\alpha}}{2}).
\end{align}
Then,  $b_0>0,$ $b_0+b_1>0,$  $b_0+b_2>0,$  $b_0+b_1+b_2>0$ and $b_1b_2>0$ due to Lemma \ref{Inequalitiies} and the expression in \eqref{barnesbeta22} with these values of the $b$s coincides
with $\mathfrak{M}_1(s\,|\,\alpha, \gamma),$ hence
\begin{equation}
\mathfrak{M}_{1}(s\,|\,\alpha, \gamma) =  {\bf E}\Bigl[\beta_{22}^{s}\Bigl(b_0=1+\tau, \, b_1=\frac{2}{\gamma}(\alpha_1-\frac{\bar{\alpha}}{2}), \, b_2=
\frac{2}{\gamma}(\alpha_3-\frac{\bar{\alpha}}{2})
\Bigr)\Bigr].
\end{equation}

The argument for $\mathfrak{M}_2(s\,|\,\alpha, \gamma)$ is similar but requires an extra step. 
To prove that $\mathfrak{M}_2(s\,|\,\alpha, \gamma)$ is the Mellin transform we split it further into two groups of factors,
\begin{align}
\mathfrak{M}_{21}(s\,|\,\alpha, \gamma) =  &
\frac{\Gamma_2(s+1+\tau-\frac{2}{\gamma}\alpha_1\,|\,\tau)}{\Gamma_2(1+\tau-\frac{2}{\gamma}\alpha_1\,|\,\tau)}
\frac{\Gamma_2(2(1+\tau)-\frac{\bar{\alpha}}{\gamma}\,|\,\tau)}{\Gamma_2(s+2(1+\tau)-\frac{\bar{\alpha}}{\gamma}\,|\,\tau)},\label{M21} \\
\mathfrak{M}_{22}(s\,|\,\alpha, \gamma) =  &
\frac{\Gamma_2(1+\tau+\frac{2}{\gamma}(\alpha_2-\frac{\bar{\alpha}}{2})\,|\,\tau)}{\Gamma_2(s+1+\tau+\frac{2}{\gamma}(\alpha_2-\frac{\bar{\alpha}}{2})\,|\,\tau)}  \frac{\Gamma_2(s+1+\tau-\frac{2}{\gamma}\alpha_3\,|\,\tau)}{\Gamma_2(1+\tau-\frac{2}{\gamma}\alpha_3\,|\,\tau)}, \label{M22}
\end{align}
so that
\begin{equation}\label{MM1M21M22}
\mathfrak{M}(s\,|\,\alpha, \gamma) = \Gamma(1+\frac{s}{\tau})  \,\mathfrak{M}_1(s\,|\,\alpha, \gamma) \,\mathfrak{M}_{21}(s\,|\,\alpha, \gamma) \, \mathfrak{M}_{22}(s\,|\,\alpha, \gamma).
\end{equation}
Let
\begin{align}
b_0 & = 1+\tau-\frac{2}{\gamma}\alpha_1, \\
b_1 & = 1+\tau+\frac{2}{\gamma}(\alpha_1-\frac{\bar{\alpha}}{2}).
\end{align}
Then, (\ref{M21}) is a special case of (\ref{barnesbeta21}) with these values of $b_0$ and $b_1.$ They are positive by Lemma \ref{Inequalitiies}, hence
\begin{equation}
\mathfrak{M}_{21}(s\,|\,\alpha, \gamma) =  {\bf E}\Bigl[\beta_{21}^{s}\Bigl(b_0=1+\tau-\frac{2}{\gamma}\alpha_1, \, b_1=1+\tau+\frac{2}{\gamma}(\alpha_1-\frac{\bar{\alpha}}{2})\Bigr)\Bigr].
\end{equation}
Now, let
\begin{align}
b_0 & = 1+\tau-\frac{2}{\gamma}\alpha_3, \\
b_1 & = \frac{2}{\gamma}(\frac{\bar{\alpha}}{2} - \alpha_1).
\end{align}
Then, (\ref{M22}) is a special case of (\ref{barnesbeta21}) with these values of $b_0$ and $b_1.$ Indeed, these are positive  by Lemma \ref{Inequalitiies}  and
\begin{equation}
b_0+b_1 = 1+\tau+\frac{2}{\gamma}(\alpha_2-\frac{\bar{\alpha}}{2}) 
\end{equation}
by the definition of $\bar{\alpha},$ hence,
\begin{equation}
\mathfrak{M}_{22}(s\,|\,\alpha, \gamma) =  {\bf E}\Bigl[\beta_{21}^{s}\Bigl(b_0=1+\tau-\frac{2}{\gamma}\alpha_3, \, b_1=\frac{2}{\gamma}(\frac{\bar{\alpha}}{2} - \alpha_1)\Bigr)\Bigr].
\end{equation}
This completes the proof of Part I. 

To prove Part II we first need to establish the identity
\begin{align}
\Gamma(s) \, \mathfrak{M}(s\,|\,\alpha, \gamma)\Big\vert_{s=s_0} = & 
2\pi \tau^{-\frac{s_0}{\tau}-\frac{1}{2}}\,
\frac{\Gamma_2(s_0|\tau)}{\Gamma_2(1+\tau|\tau)}
\, \frac{\Gamma_2(1+\tau-s_0|\tau)}{\Gamma_2(1+\tau|\tau)}\, \nonumber \\
& \times 
\prod\limits_{i=1}^3 \frac{\Gamma_2(1+\tau+\frac{2}{\gamma}(\alpha_i-\frac{\bar{\alpha}}{2})\,|\,\tau)}{\Gamma_2(s_0+1+\tau+\frac{2}{\gamma}(\alpha_i-\frac{\bar{\alpha}}{2})\,|\,\tau)}
\,
\prod\limits_{i=1}^3 \frac{\Gamma_2(s_0+1+\tau-\frac{2}{\gamma}\alpha_i\,|\,\tau)}{\Gamma_2(1+\tau-\frac{2}{\gamma}\alpha_i\,|\,\tau)}
,
\end{align}
which follows from \eqref{shift} by means of
\begin{equation}
\Gamma(s_0)\, \Gamma(1+\frac{s_0}{\tau}) \,\frac{\Gamma_2(s_0+1+\tau\,|\,\tau)}{\Gamma_2(1+\tau\,|\,\tau)} = 2\pi \tau^{-\frac{s_0}{\tau}-\frac{1}{2}}\,
\frac{\Gamma_2(s_0|\tau)}{\Gamma_2(1+\tau|\tau)}.
\end{equation}
Applying (\ref{diction}), we have thus established
\begin{equation}
\Gamma(s) \, \mathfrak{M}(s\,|\,\alpha, \gamma)\Big\vert_{s=s_0} =
2\pi \tau^{-\frac{s_0}{\tau}-\frac{1}{2}}\, \bigl(  \frac{\gamma}{2}\bigr)^{-s_0 Q\gamma}   
\frac{\prod\limits_{i=1}^3 \Upsilon_{\frac{\gamma}{2}} (\alpha_i)}
{\Gamma^2_{\frac{\gamma}{2}}(Q) \Upsilon_{\frac{\gamma}{2}} (\frac{\bar{\alpha}}{2}-Q)
\prod\limits_{i=1}^3 \Upsilon_{\frac{\gamma}{2}} (\frac{\bar{\alpha}}{2}-\alpha_i)}.
\end{equation}
Applying Lemma \ref{upsprime} and simplifying, we obtain
\begin{equation}
\Gamma(s) \, \mathfrak{M}(s\,|\,\alpha, \gamma)\Big\vert_{s=s_0} =  \bigl(\frac{\gamma}{2}\bigr)^{-2 s_0}\, \frac{\gamma}{2}
\,\frac{\Upsilon'_{\frac{\gamma}{2}}(0)\prod\limits_{i=1}^3 \Upsilon_{\frac{\gamma}{2}} (\alpha_i)}
{ \Upsilon_{\frac{\gamma}{2}} (\frac{\bar{\alpha}}{2}-Q)
\prod\limits_{i=1}^3 \Upsilon_{\frac{\gamma}{2}} (\frac{\bar{\alpha}}{2}-\alpha_i)}.
\end{equation}
Upon comparing with the right-hand side of the DOZZ formula, we see that they differ by the factor
\begin{equation}
 \Bigl(  \pi  l(\frac{\gamma^2}{4}) (\frac{\gamma}{2})^{-\frac{\gamma^2}{2}}\Bigr)^{-s_0}
\end{equation}
which is precisely the factor in \eqref{myfactor} due to the identity
\begin{equation}
(\frac{\gamma}{2})^{-\frac{\gamma^2}{2}} = \tau^{\frac{1}{\tau}}.
\end{equation}
The scaling invariance in \eqref{rhogscaling} follows from \eqref{lambdascaling}. 
The symmetry in \eqref{a1a3} is a corollary of \eqref{MM1M2} as both $\mathfrak{M}_1(s\,|\,\alpha, \gamma)$ and $\mathfrak{M}_2(s\,|\,\alpha, \gamma)$ are symmetric under $\alpha_1\leftrightarrow \alpha_3.$
Finally, \eqref{totalmass} follows from \eqref{shift}.
\qed
\end{proof}

We now proceed to the proof of Theorem \ref{Deformation} and start with a series of lemmas. The first lemma extends Lemma 
\ref{Inequalitiies}.
\begin{lemma}\label{Inequalitiies2}
Let $-1\leq\rho\leq 0$ and 
\begin{gather}
E>0, \\
s+E>0, \\
s_0+E>0.
\end{gather}
Then, 
\begin{gather}
-\rho s+ E>0, \label{E} \\
s+E + \rho(s-s_0) >0. \label{E2}
\end{gather}
In particular, all arguments of the double gamma factors in \eqref{aM1} -- \eqref{aM22} are positive if $s$ and $s_0$ satisfy \eqref{mc} and \eqref{mc2}, $-1\leq\rho\leq 0$  and the $\alpha$s satisfy \eqref{bin1}.
\end{lemma}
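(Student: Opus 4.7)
The plan splits into two steps: first, verify the two abstract inequalities \eqref{E} and \eqref{E2} in isolation; second, reduce every argument appearing in \eqref{aM1}--\eqref{aM22} to one of the templates covered by step one together with Lemma \ref{Inequalitiies}.

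For \eqref{E}, I would argue by the sign of $\Re(s)$. If $s\geq 0,$ then $-\rho s\geq 0$ since $-\rho\geq 0,$ hence $-\rho s+E\geq E>0.$ If $s<0,$ the bound $-\rho\leq 1$ combined with $s<0$ yields $-\rho\cdot s\geq s,$ so that $-\rho s+E\geq s+E>0$ by hypothesis. The inequality \eqref{E2} I would prove by the algebraic rewriting
\begin{equation*}
s+E+\rho(s-s_0) \;=\; (1+\rho)(s+E) + (-\rho)(s_0+E),
\end{equation*}
which exhibits the left-hand side as a nonnegative combination of two positive quantities; since $1+\rho\geq 0,$ $-\rho\geq 0,$ and at least one of the two coefficients is strictly positive for $\rho\in[-1,0],$ this gives the claimed strict positivity.

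For the concrete assertion, I would scan the arguments of each $\Gamma_2(\,\cdot\,|\,\tau)$ factor in \eqref{aM1}--\eqref{aM22} and check that each one fits one of four templates: a pure constant $E,$ a shifted constant $E-\rho s_0,$ an expression of the form $s+E+\rho(s-s_0),$ or one of the form $-\rho s+E.$ In every instance $E$ is drawn from the short list $1+\tau,$ $1+\tau-\frac{2}{\gamma}\alpha_k,$ $1+\tau+\frac{2}{\gamma}(\alpha_k-\frac{\bar\alpha}{2}),$ or $2(1+\tau)-\frac{\bar\alpha}{\gamma}.$ Positivity of each such $E,$ of $s+E,$ and of $s_0+E$ is supplied by Lemma \ref{Inequalitiies} (concretely, by \eqref{bin3}, \eqref{eqcondit3}, \eqref{sadd3} taken at $s=0$ for $E$ itself; by \eqref{sin}, \eqref{sadd}, \eqref{sadd3} for $s+E;$ and by the same bounds specialized to $s=s_0$ for $s_0+E$). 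The pure constant shape is then handled immediately, the shape $E-\rho s_0$ follows from \eqref{E} with $s$ replaced by $s_0,$ and the remaining two shapes are direct applications of \eqref{E} and \eqref{E2}.

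The only point requiring even a moment's care is the sign split in the proof of \eqref{E}; the rest is routine pattern matching against the displayed double gamma arguments, and no new machinery beyond Lemma \ref{Inequalitiies} enters.
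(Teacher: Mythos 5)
Your proposal is correct and takes essentially the same approach as the paper. The identity you display for \eqref{E2} is exactly the convex-combination decomposition implicit in the paper's chain of inequalities; for \eqref{E} the paper uses the analogous decomposition $-\rho s+E=(-\rho)(s+E)+(1+\rho)E$ where you argue by a sign split on $s$, but the two are equally direct, and your reduction of the concrete arguments to the four templates backed by Lemma \ref{Inequalitiies} matches the paper's reasoning.
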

\begin{proof}If $\rho=0,$ there is nothing to show. Else, $-\rho s > \rho E$ because $\rho< 0$ 
so that $-\rho s+E >  (1+\rho)E\geq 0,$ because $\rho\geq -1$ and $E>0$ as desired. Similarly,  $-\rho s_0 > \rho E$
so that $s+E+\rho(s-s_0) > (1+\rho)(s+E)\geq 0.$ To apply these inequalities to the arguments of the double gamma factors in 
\eqref{aM1} -- \eqref{aM22} it is sufficient to note that $E$ corresponds to one of the
expressions $1+\tau,$ $1+\tau-\frac{2\alpha_k}{\gamma},$ $1+\tau+\frac{2}{\gamma}(\alpha_k-\frac{\bar{\alpha}}{2}),$ and 
$2(1+\tau) - \frac{\bar{\alpha}}{\gamma}.$ By Lemma \ref{Inequalitiies} we know that $E>0$ and $s+E>0$ for any $s$ that
satisfies \eqref{mc} and \eqref{mc2} so in particular $s_0+E>0.$ The result now follows from \eqref{E} and \eqref{E2}.
\qed
\end{proof}

The next lemmas provide deformations of the Barnes beta distributions of types $(2,1)$ and $(2,2)$ in \eqref{barnesbeta21}
and \eqref{barnesbeta22}.
\begin{lemma}\label{deformation21}
Let $b_0,b_1>0$ and $\beta_{21}(b_0, b_1)$ be a Barnes beta distribution of type $(2,1).$ Let 
\begin{gather}
-1\leq\rho\leq 0, \label{in21rho} \\
s_0+b_0>0, \label{in21s0} 
\end{gather}
and consider the following deformation of the Mellin transform of  $\beta_{21}(b_0, b_1),$
\begin{align}
\eta_{21}(s, \rho) = &
\frac{\Gamma_2(s(1+\rho)+b_0-\rho s_0\,|\,\tau)}{\Gamma_2(b_0-\rho s_0\,|\,\tau)}
\frac{\Gamma_2( b_0+b_1-\rho s_0\,|\,\tau)}{\Gamma_2(s(1+\rho)+b_0+b_1-\rho s_0\,|\,\tau)}\nonumber \\
& \times 
\frac{\Gamma_2(-\rho s+b_0\,|\,\tau)}{\Gamma_2(b_0\,|\,\tau)}
\frac{\Gamma_2( b_0+b_1\,|\,\tau)}{\Gamma_2(-\rho s+b_0+b_1\,|\,\tau)}. 
\label{barnesbeta21Deform}
\end{align}
Then, 
this deformation is analytic over the same domain as the original Mellin transform, $s+b_0>0,$ is the Mellin transform of the following random variable,
\begin{equation}
\beta_{21}^{1+\rho}(b_0-\rho s_0, b_1)\,\beta_{21}^{-\rho}(b_0, b_1),
\end{equation}
and this deformation has the same value at $s=s_0$ as the original Mellin transform,
\begin{equation}\label{s0invariant21}
\eta_{21}(s_0, \rho) = {\bf E}[\beta^{s_0}_{21}(b_0, b_1)].
\end{equation}
\end{lemma}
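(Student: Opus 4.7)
The plan is to establish each of the three assertions by a direct but carefully bookkept computation, exploiting the telescoping structure that the formula \eqref{barnesbeta21Deform} has been built to have.

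For analyticity, I would show that all four double gamma factors appearing in $\eta_{21}(s,\rho)$ have arguments with positive real part whenever $\Re(s) + b_0 > 0$. The arguments split into two families: $(1+\rho)s + E - \rho s_0$ and $-\rho s + E$, with $E \in \{b_0, b_0 + b_1\}$. Since $b_0, b_1 > 0$ gives $E > 0$, and $s_0 + b_0 > 0$ combined with $b_1 > 0$ gives $s_0 + E > 0$, Lemma \ref{Inequalitiies2} applies verbatim and yields both \eqref{E} and \eqref{E2} for all four arguments. Analyticity of $\Gamma_2(\cdot|\tau)$ on the right half-plane then gives analyticity of $\eta_{21}$ on $\Re(s) + b_0 > 0$, the same domain as the undeformed Mellin transform.

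For the factorization, I would simply read off from \eqref{barnesbeta21} that
\begin{equation*}
{\bf E}\bigl[\beta_{21}^{(1+\rho)s}(b_0 - \rho s_0, b_1)\bigr] = \frac{\Gamma_2((1+\rho)s + b_0 - \rho s_0\,|\,\tau)}{\Gamma_2(b_0 - \rho s_0\,|\,\tau)} \frac{\Gamma_2(b_0 + b_1 - \rho s_0\,|\,\tau)}{\Gamma_2((1+\rho)s + b_0 + b_1 - \rho s_0\,|\,\tau)},
\end{equation*}
which matches the first two factors of \eqref{barnesbeta21Deform}, and likewise ${\bf E}[\beta_{21}^{-\rho s}(b_0, b_1)]$ matches the last two factors. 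The parameter conditions needed to invoke \eqref{barnesbeta21} for the first factor are $b_0 - \rho s_0 > 0$ (immediate from $b_0 > 0$ and $-\rho s_0 \geq (1+\rho)b_0 - b_0 \cdot 1 \geq -b_0$, but cleaner: apply \eqref{E} with $s$ replaced by $s_0$) and $(1+\rho)s + b_0 - \rho s_0 > 0$ (equivalent to \eqref{E2}); for the second factor the condition $-\rho s + b_0 > 0$ is \eqref{E}. Independence of the two factors yields the identity $\eta_{21}(s,\rho) = {\bf E}\bigl[\beta_{21}^{1+\rho}(b_0 - \rho s_0, b_1)\,\beta_{21}^{-\rho}(b_0, b_1)\bigr]$ as desired.

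For \eqref{s0invariant21}, set $s = s_0$ and observe that $(1+\rho)s_0 - \rho s_0 = s_0$ exactly, so that the first factor of \eqref{barnesbeta21Deform} collapses to $\Gamma_2(s_0 + b_0\,|\,\tau)/\Gamma_2(b_0 - \rho s_0\,|\,\tau) \cdot \Gamma_2(b_0 + b_1 - \rho s_0\,|\,\tau)/\Gamma_2(s_0 + b_0 + b_1\,|\,\tau)$, while the second factor becomes $\Gamma_2(b_0 - \rho s_0\,|\,\tau)/\Gamma_2(b_0\,|\,\tau) \cdot \Gamma_2(b_0 + b_1\,|\,\tau)/\Gamma_2(b_0 + b_1 - \rho s_0\,|\,\tau)$. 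The $\rho s_0$-dependent factors cancel telescopically, leaving precisely ${\bf E}[\beta_{21}^{s_0}(b_0, b_1)]$ from \eqref{barnesbeta21}.

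There is no real obstacle here, since the formula \eqref{barnesbeta21Deform} has been designed so that the $-\rho s_0$ shift in the first pair of double gammas is cancelled by the evaluation of the second pair at $s = s_0$. The only point requiring a touch of care is ensuring that all positivity conditions remain valid on the full hypothesized range $-1 \leq \rho \leq 0$, and this is precisely what Lemma \ref{Inequalitiies2} was established to handle.
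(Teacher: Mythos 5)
Your proof is correct and follows essentially the same approach as the paper: invoke Lemma~\ref{Inequalitiies2} for analyticity and the positivity of $b_0 - \rho s_0$, read off the factorization directly from \eqref{barnesbeta21}, and verify the value at $s_0$ by telescopic cancellation (the paper simply labels this last step ``by direct inspection''). Your exposition of the telescoping is more explicit than the paper's one-line appeal to inspection, but the argument is the same.
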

\begin{proof}
By Lemma \ref{Inequalitiies2} the conditions in \eqref{in21rho} and \eqref{in21s0} and $b_0, \,b_1>0$ guarantee that $\eta_{21}(s,\rho)$ is analytic over $s + b_0>0,$ and $b_0-\rho s_0>0$ so that $\beta_{21}(b_0-\rho s_0, b_1)$ is well-defined. 
The equality
\begin{equation}
\eta_{21}(s, \rho) = {\bf E}\Bigl[  \Bigl(\beta_{21}^{1+\rho}(b_0-\rho s_0, b_1) \Bigr)^s\Bigr]\,{\bf E}\Bigl[\Bigl(\beta_{21}^{-\rho}(b_0, b_1)\Bigr)^s  \Bigr], \; s + b_0>0,
\end{equation}
and \eqref{s0invariant21} follow from \eqref{barnesbeta21} by direct inspection.
\qed
\end{proof}

\begin{lemma}\label{deformation22}
Let $b_0>0, b_0+b_1, b_0+b_2, b_0+b_1+b_2, b_1b_2>0$ and 
$\beta_{22}(b_0, b_1, b_2)$ be a Barnes beta distribution of type $(2,2).$ Let 
\begin{gather}
-1\leq\rho\leq 0, \label{in22rho} \\
s_0+b_0>0, \; s_0+b_0+b_1>0, \; s_0+b_0+b_2>0, \; s_0+b_0+b_1+b_2>0,    \label{in22s0} 
\end{gather}
and consider the following deformation of the Mellin transform of  $\beta_{22}(b_0, b_1, b_2),$
\begin{align}
\eta_{22}(s, \rho) = &
\frac{\Gamma_2(s(1+\rho)+b_0-\rho s_0\,|\,\tau)}{\Gamma_2(b_0-\rho s_0\,|\,\tau)}
\frac{\Gamma_2(b_0+b_1-\rho s_0\,|\,\tau)}{\Gamma_2(s(1+\rho)+b_0+b_1-\rho s_0\,|\,\tau)}\nonumber \\
& \times 
\frac{\Gamma_2(b_0+b_2-\rho s_0\,|\,\tau)}{\Gamma_2(s(1+\rho)+b_0+b_2-\rho s_0\,|\,\tau)}
\frac{\Gamma_2(s(1+\rho)+b_0+b_1+b_2 - \rho s_0\,|\,\tau)}{\Gamma_2(b_0+b_1+b_2-\rho s_0\,|\,\tau)}\nonumber \\
& \times 
\frac{\Gamma_2(-\rho s+b_0\,|\,\tau)}{\Gamma_2(b_0\,|\,\tau)}
\frac{\Gamma_2(b_0+b_1\,|\,\tau)}{\Gamma_2(-\rho  s+b_0+b_1\,|\,\tau)}\nonumber \\
& \times 
\frac{\Gamma_2(b_0+b_2\,|\,\tau)}{\Gamma_2(-\rho s+b_0+b_2\,|\,\tau)}
\frac{\Gamma_2(-\rho s+b_0+b_1+b_2\,|\,\tau)}{\Gamma_2(b_0+b_1+b_2\,|\,\tau)}.
\label{barnesbeta22Deform}
\end{align}
Then, this deformation is analytic over the same domain as the original Mellin transform, $s+b_0>0,$ $s+b_0+b_1>0,$ $s+b_0+b_2>0,$ $s+b_0+b_1+b_2>0,$
is the Mellin transform of the following random variable,
\begin{equation}
\beta_{22}^{1+\rho}(b_0-\rho s_0, b_1, b_2)\,\beta_{22}^{-\rho}(b_0+b_1+b_2, -b_1, -b_2),
\end{equation}
and this deformation has the same value at $s=s_0$ as the original Mellin transform,
\begin{equation}\label{s0invariant22}
\eta_{22}(s_0, \rho) = {\bf E}[\beta^{s_0}_{22}(b_0, b_1, b_2)].
\end{equation}
\end{lemma}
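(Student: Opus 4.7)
My plan is to proceed in three steps that parallel the proof of Lemma \ref{deformation21}: verify analyticity over the stated domain, identify $\eta_{22}(s,\rho)$ as a product of Mellin transforms of two type $(2,2)$ Barnes beta variables, and check invariance at $s=s_0$ by direct cancellation. The structural observation to exploit is that the first four $\Gamma_2$ ratios in \eqref{barnesbeta22Deform} arise from \eqref{barnesbeta22} by the substitutions $s\mapsto s(1+\rho)$ and $b_0\mapsto b_0-\rho s_0$ (leaving $b_1,b_2$ untouched), while the last four arise from the substitution $s\mapsto -\rho s$ combined with the ``dual'' parameter change $(b_0,b_1,b_2)\mapsto (b_0+b_1+b_2,-b_1,-b_2)$.

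For analyticity, the arguments of the eight double gamma factors split into two families: those of the form $s(1+\rho)+E-\rho s_0=s+E+\rho(s-s_0)$ and those of the form $-\rho s+E$, in both cases with $E\in\{b_0,\,b_0+b_1,\,b_0+b_2,\,b_0+b_1+b_2\}$. Both are positive on the stated domain by Lemma \ref{Inequalitiies2}, whose hypotheses $E>0$, $s+E>0$, $s_0+E>0$ reduce precisely to \eqref{in22s0} together with the original Barnes beta hypotheses on $(b_0,b_1,b_2)$.

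For the factorization I would read off the first group as ${\bf E}[\beta_{22}^{1+\rho}(b_0-\rho s_0,b_1,b_2)^{s}]$; the five Barnes beta existence constraints reduce to $b_0-\rho s_0+E'>0$ for $E'\in\{0,b_1,b_2,b_1+b_2\}$, handled again by Lemma \ref{Inequalitiies2}, together with the unchanged sign condition $b_1b_2>0$. For the second group the swap $(b_0,b_1,b_2)\mapsto (b_0+b_1+b_2,-b_1,-b_2)$ interchanges the pairs $\{b_0,\,b_0+b_1+b_2\}$ and $\{b_0+b_1,\,b_0+b_2\}$, so the ``$s+b_0$'' and ``$s+b_0+b_1+b_2$'' numerator slots of \eqref{barnesbeta22} become $-\rho s+b_0$ and $-\rho s+b_0+b_1+b_2$, and the denominator slots swap correspondingly, matching \eqref{barnesbeta22Deform} exactly. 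The five Barnes beta conditions for $(b_0+b_1+b_2,-b_1,-b_2)$ reduce term-for-term to the five original conditions, so no new hypotheses are needed.

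Finally, to verify \eqref{s0invariant22} one sets $s=s_0$ and notes that $s_0(1+\rho)-\rho s_0=s_0$. The first group then contributes the four usual $\Gamma_2(s_0+E)/\Gamma_2(E)$-style ratios of \eqref{barnesbeta22} together with four auxiliary factors of the form $\Gamma_2(E-\rho s_0)$, while the second group at $s_0$ contributes the four plain $\Gamma_2(E)$ factors absent from the first group, accompanied by the same four $\Gamma_2(E-\rho s_0)$ factors in the opposite numerator/denominator positions. The auxiliary factors cancel in four matching pairs, leaving precisely ${\bf E}[\beta_{22}^{s_0}(b_0,b_1,b_2)]$ per \eqref{barnesbeta22}. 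The only nontrivial step is spotting the dual parameterization $(b_0+b_1+b_2,-b_1,-b_2)$ that turns the second block into an honest Barnes beta Mellin transform; everything else is mechanical pairing of $\Gamma_2$ arguments.
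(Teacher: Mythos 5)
Your proposal is correct and follows essentially the same route as the paper: analyticity and well-definedness of both Barnes beta factors are obtained from Lemma \ref{Inequalitiies2} under \eqref{in22rho} and \eqref{in22s0}, and the factorization with the dual parameters $(b_0+b_1+b_2,-b_1,-b_2)$ together with the cancellation at $s=s_0$ is exactly what the paper summarizes as following from \eqref{barnesbeta22} by direct inspection. Your write-up merely makes the ``direct inspection'' step explicit.
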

\begin{proof}
By Lemma \ref{Inequalitiies2} the conditions in \eqref{in22rho} and \eqref{in22s0} guarantee that $\eta_{22}(s,\rho)$ is analytic over $s+b_0>0,$ $s+b_0+b_1>0,$ $s+b_0+b_2>0,$ $s+b_0+b_1+b_2>0$ and $b_0-\rho s_0>0$,  $b_0+b_1-\rho s_0>0$,
$b_0+b_2-\rho s_0>0$,  $b_0+b_1+b_2-\rho s_0>0$, so that $\beta_{22}(b_0-\rho s_0, b_1, b_2)$
is well-defined. We also note that $\beta_{22}(b_0+b_1+b_2, -b_1, -b_2)$ is also well defined because $b_0>0,$ $b_0+b_1>0,$ 
$b_0+b_2>0,$ $b_0+b_1+b_2>0,$ and $b_1 b_2>0$ by assumption. 
The equality
\begin{equation}
\eta_{22}(s, \rho) = {\bf E}\Bigl[  \Bigl(\beta_{22}^{1+\rho}(b_0-\rho s_0, b_1, b_2)\Bigr)^s\Bigr]\,{\bf E}\Bigl[\Bigl(\beta_{22}^{-\rho}(b_0+b_1+b_2, -b_1, -b_2)\Bigr)^s  \Bigr]
\end{equation}
over the domain $s+b_0>0,$ $s+b_0+b_1>0,$ $s+b_0+b_2>0,$ $s+b_0+b_1+b_2>0$ 
and \eqref{s0invariant22} follow from \eqref{barnesbeta22} by direct inspection.
\qed
\end{proof}

We can now give the proof of Theorem \ref{Deformation}.
\begin{proof}
Lemma \ref{Inequalitiies2} guarantees that $\mathfrak{M}_{1}(s\,|\,\alpha, \gamma, \rho_1),$ $\mathfrak{M}_{21}(s\,|\,\alpha, \gamma, \rho_{21}),$ and $\mathfrak{M}_{22}(s\,|\,\alpha, \gamma, \rho_{22})$ are analytic over the domain that is specified
in \eqref{mc} and \eqref{mc2}. Further, by Lemmas \ref{deformation21} and \ref{deformation22} we have the identities
\begin{align}
\mathfrak{M}_{1}(s\,|\,\alpha, \gamma, \rho_1)  & =  {\bf E}\Bigl[\beta_{22}^{s(1+\rho_1)}\Bigl(b_0=1+\tau-\rho_1s_0, \, b_1=\frac{2}{\gamma}(\alpha_1-\frac{\bar{\alpha}}{2}), \, b_2=\frac{2}{\gamma}(\alpha_3-\frac{\bar{\alpha}}{2})
\Bigr)\Bigr]   \nonumber \\
& \times 
{\bf E}\Bigl[\beta_{22}^{-\rho_1 s}\Bigl(b_0=1+\tau-\frac{2}{\gamma}\alpha_2, \, b_1=\frac{2}{\gamma}(\frac{\bar{\alpha}}{2}-\alpha_1), \, b_2=\frac{2}{\gamma}(\frac{\bar{\alpha}}{2}-\alpha_3\Bigr)\Bigr] 
, \\
\mathfrak{M}_{21}(s\,|\,\alpha, \gamma, \rho_{21})  & = {\bf E}\Bigl[\beta_{21}^{s(1+\rho_{21})}\Bigl(b_0=1+\tau-\frac{2}{\gamma}\alpha_1-\rho_{21}s_0, \, b_1=1+\tau+\frac{2}{\gamma}(\alpha_1-\frac{\bar{\alpha}}{2})\Bigr)\Bigr] 
\nonumber \\
& \times 
{\bf E}\Bigl[\beta_{21}^{-\rho_{21}s }\Bigl(b_0=1+\tau-\frac{2}{\gamma}\alpha_1, \, b_1=1+\tau+\frac{2}{\gamma}(\alpha_1-\frac{\bar{\alpha}}{2})\Bigr)\Bigr]
, \\
\mathfrak{M}_{22}(s\,|\,\alpha, \gamma, \rho_{22})  & = {\bf E}\Bigl[\beta_{21}^{s(1+\rho_{22})}\Bigl(b_0=1+\tau-\frac{2}{\gamma}\alpha_3-\rho_{22}s_0, \, b_1=\frac{2}{\gamma}(\frac{\bar{\alpha}}{2} - \alpha_1)\Bigr)\Bigr] \nonumber \\
& \times 
{\bf E}\Bigl[\beta_{21}^{-\rho_{22} s}\Bigl(b_0=1+\tau-\frac{2}{\gamma}\alpha_3, \, b_1=\frac{2}{\gamma}(\frac{\bar{\alpha}}{2} - \alpha_1)\Bigr)\Bigr],
\end{align}
and 
\begin{align}
\mathfrak{M}_{1}(s_0\,|\,\alpha, \gamma, \rho_1)  & = \mathfrak{M}_{1}(s_0\,|\,\alpha, \gamma) , \\
\mathfrak{M}_{21}(s_0\,|\,\alpha, \gamma, \rho_{21})  & = \mathfrak{M}_{21}(s_0\,|\,\alpha, \gamma) , \\
\mathfrak{M}_{22}(s_0\,|\,\alpha, \gamma, \rho_{22})  & = \mathfrak{M}_{22}(s_0\,|\,\alpha, \gamma),
\end{align}
where $\mathfrak{M}_{1}(s\,|\,\alpha, \gamma),$ $\mathfrak{M}_{21}(s\,|\,\alpha, \gamma),$ and
$\mathfrak{M}_{22}(s\,|\,\alpha, \gamma)$
are the factors of the original Mellin transform, cf. \eqref{MM1M21M22}. 
This proves Part I.

The symmetry in $(\alpha_1, \alpha_3)$ when $\rho_{21}=\rho_{22}$ derives from the fact that the
original factors $\mathfrak{M}_{1}(s\,|\,\alpha, \gamma)$ and $\mathfrak{M}_{2}(s\,|\,\alpha, \gamma),$ cf. \eqref{MM1M2},
are symmetric in $(\alpha_1, \alpha_3).$ The $\rho_1-$transformation preserves this symmetry in $\mathfrak{M}_{1}(s\,|\,\alpha, \gamma, \rho_1),$ cf. \eqref{aM1}. If $\rho_{21}=\rho_{22},$ then $\mathfrak{M}_{21}(s\,|\,\alpha, \gamma, \rho_{21})
\mathfrak{M}_{22}(s\,|\,\alpha, \gamma, \rho_{22})$ is also symmetric in $(\alpha_1, \alpha_3),$ cf.  \eqref{aM21} and
\eqref{aM22}. If $\rho_1=\rho_{21}=\rho_{22},$ then the deformed Mellin transform has the same symmetry as the
original Mellin transform, which is symmetric in $(\alpha_1, \alpha_2, \alpha_3).$ 

Finally, \eqref{atotalmass} follows from \eqref{shift} and the leading asymptotic of $\mathfrak{M}(s\,|\, \alpha,\gamma, \rho_1, \rho_{21}, \rho_{22})$ follows from (\ref{beta21asym}) and Stirling's formula. 
\qed
\end{proof}

We finally give the proof of Theorem \ref{Minimals0}.
\begin{proof}
The argument is very similar to the proof of Theorem \ref{Minimal}. The decomposition
\begin{equation}\label{MM1M2s0}
\mathfrak{M}'(s\,|\,\alpha, \gamma) = \Gamma(1+\frac{s}{\tau})  \,\mathfrak{M}_1(s\,|\,\alpha, \gamma) \,\mathfrak{M}_2'(s\,|\,\alpha, \gamma)
\end{equation}
in \eqref{MM1M2} still holds provided $\mathfrak{M}_1(s\,|\,\alpha, \gamma)$ is defined as before and $\mathfrak{M}_2'(s\,|\,\alpha, \gamma)$ is re-defined to be
\begin{align}
\mathfrak{M}_2'(s\,|\,\alpha, \gamma) =  &
\frac{\Gamma_2(s+1+\tau-\frac{2}{\gamma}\alpha_1\,|\,\tau)}{\Gamma_2(1+\tau-\frac{2}{\gamma}\alpha_1\,|\,\tau)}
\frac{\Gamma_2(2(1+\tau)-\frac{\bar{\alpha}}{\gamma}\,|\,\tau)}{\Gamma_2(s+2(1+\tau)-\frac{\bar{\alpha}}{\gamma}\,|\,\tau)}
\frac{\Gamma_2(1+\tau+\frac{2}{\gamma}(\alpha_2-\frac{\bar{\alpha}}{2})\,|\,\tau)}{\Gamma_2(s+1+\tau+\frac{2}{\gamma}(\alpha_2-\frac{\bar{\alpha}}{2})\,|\,\tau)}\nonumber \\
& \times
\frac{\Gamma_2(s+2(1+\tau)-\frac{2}{\gamma}\alpha_3\,|\,\tau)}{\Gamma_2(2(1+\tau)-\frac{2}{\gamma}\alpha_3\,|\,\tau)}.
\label{M2s0}
\end{align}
From the definition of $\beta_{22}$ we have,
\begin{equation}
\mathfrak{M}_2'(s\,|\,\alpha, \gamma) ={\bf E}\Bigl[\beta_{22}^{s}\Bigl(b_0=1+\tau-\frac{2}{\gamma}\alpha_1, \, b_1=1+\tau+\frac{2}{\gamma}(\alpha_1-\frac{\bar{\alpha}}{2}), 
b_2=  \frac{\bar{\alpha}}{\gamma} -  \frac{2\alpha_3}{\gamma} \Bigr)\Bigr].
\end{equation}
The probabilistic property of $\mathfrak{M}'(s\,|\,\alpha, \gamma)$ follows. As $s_0=0,$ the value of the pre-factor in \eqref{firstMs0} at $s=s_0$ is 1 so that
$\mathfrak{M}'(s_0\,|\,\alpha, \gamma)$ recovers the DOZZ formula. Finally, the leading asymptotic of $\mathfrak{M}'(s\,|\, \alpha,\gamma)$ follows from Stirling's formula.
\qed
\end{proof}

\section{Discussion and Open Questions}
In this section we will examine the limitations of our construction from the viewpoint of formulating a conjecture for the GMC on the sphere and present a comprehensive summary of all constraints that need to be satisfied to formulate such a conjecture. 

We start by examining the first moment  of the minimal solution, cf. \eqref{myfactor}. 
In the special case of $\alpha_i=0$ $M(\alpha,\gamma)$ has the Mellin transform given in \eqref{totalmass}
so that
our conjecture predicts
\begin{align}
{\bf E}[\rho_g(\alpha=0, \gamma)]=  
e^{2\frac{(1+\tau)}{\tau}\chi_g} \frac{\pi}{1+\frac{1}{\tau}}
. 
\end{align}
Upon comparing with \eqref{firstmomentzeroalpha}, we observe that the extra factor of $1/(1+\frac{1}{\tau})$ makes this incompatible with
the correct first moment for any metric.

Next, we move on to the small deviation asymptotic of our construction. Given the known asymptotic of the Mellin transform of the general solution in
\eqref{mellinasymptotic}, we see that is satisfies \eqref{mellinasymptoticgeneral} so that our construction has the asymptotic,
\begin{equation}\label{oursmalldeviation}
{\bf P} (M(\alpha,\gamma)^{-1}\leq \varepsilon) \thicksim  e^{-\frac{1}{\varepsilon^{\frac{\tau}{2+\tau}}}}.
\end{equation}
While this asymptotic has the desired functional form, the exponent differs from the correct exponent $\varepsilon^\tau,$ cf. \eqref{smalldeviation}.
Incidentally, we note that the extra $1+\frac{1}{\tau}$ term in the exponent of the Mellin transform asymptotic in \eqref{mellinasymptotic}, which causes the wrong small deviation asymptotic, is the same as the extra factor in the first moment.

We now proceed to summarize our approach to the analytic continuation of the DOZZ formula in terms of new variables that allow one to succinctly
re-formulate the formula itself, all constraints, the analytic continuation, and challenges of finding other solutions. Define the variables
\begin{equation}
x_i = \frac{\bar{\alpha}}{\gamma} -  \frac{2\alpha_i}{\gamma}.
\end{equation}
Then, all the constraints following from $\tau>1,$ $\alpha_i$s satisfying \eqref{alphabarsum} and \eqref{bin1}, and $s_0$ in \eqref{s0} satisfying (\ref{mc}) and (\ref{mc2}) can be formulated in terms of the following basic constraints:
\begin{gather}
-\tau<s_0<\frac{1+\tau}{2}<\tau, \label{s0bounds} \\
s_0<x_i<s_0+1+\tau, \\
0<x_i<1+\tau-s_0, \\
x_1+x_2+x_3=s_0+1+\tau.
\end{gather}
Similarly, the constraints of $s$ satisfying (\ref{mc}) and (\ref{mc2}) 
and $\tau>1$ imply:
\begin{gather}
s+\tau>0,  \\
s+\tau-s_0>0, \label{staus0bound} \\
s-s_0+x_i > 0, \\
s+1+\tau-x_i>0.
\end{gather}
These statements follow from Lemma \ref{Inequalitiies}. The gamma and double Gamma factors in the DOZZ formula then take on the form
\begin{equation}\label{DOZZxi}
\Gamma(1+\frac{s_0}{\tau})
\frac{\Gamma_2(s_0+1+\tau\,|\,\tau)}{\Gamma_2(1+\tau\,|\,\tau)}
\frac{\Gamma_2(1+\tau-s_0\,|\,\tau)}{\Gamma_2(1+\tau\,|\,\tau)} 
\prod\limits_{i=1}^3 
\frac{\Gamma_2(x_i\,|\,\tau)}{\Gamma_2(x_i-s_0\,|\,\tau)}
\frac{\Gamma_2(1+\tau-x_i\,|\,\tau)}{\Gamma_2(s_0+1+\tau-x_i\,|\,\tau)}
.
\end{equation}
The minimal solution is
\begin{equation}\label{minimalxi}
\Gamma(1+\frac{s}{\tau})
\frac{\Gamma_2(s+1+\tau\,|\,\tau)}{\Gamma_2(1+\tau\,|\,\tau)}
\frac{\Gamma_2(1+\tau-s_0\,|\,\tau)}{\Gamma_2(s-s_0+1+\tau\,|\,\tau)} 
\prod\limits_{i=1}^3 
\frac{\Gamma_2(s-s_0+x_i\,|\,\tau)}{\Gamma_2(x_i-s_0\,|\,\tau)}
\frac{\Gamma_2(1+\tau-x_i\,|\,\tau)}{\Gamma_2(s+1+\tau-x_i\,|\,\tau)}
.
\end{equation}
It is easy to see that the above bounds guarantee the positiveness of the arguments of all single and double Gamma factors and that this solution recovers the DOZZ formula at $s=s_0.$ The probabilistic structure of the minimal solution is 
\begin{equation}\label{minprobabxi}
 Y \,
\beta_{22}\bigl(b_0=1+\tau, \, b_1=-x_1, \, b_2=-x_3\bigr)
\beta_{21}\bigl(b_0=x_1-s_0, \, b_1=1+\tau-x_1\bigr)
\beta_{21}\bigl(b_0=x_3-s_0, \, b_1=x_1\bigr),
\end{equation} 
cf. Section 3 for the definitions of the random variables involved and Theorem \ref{Minimal} for the proof. Once again, the above bounds guarantee that $b_0>0,$
$b_1>0$ for all $\beta_{21}$ factors, and $b_1b_2>0,$ $b_0+b_1>0,$ $b_0+b_2>0,$ $b_0+b_1+b_2>0$ for the $\beta_{22}$ factor as required.

Now that we have summarized the key constraints of the problem and properties of our solution, we can consider the open question of what can be done differently.
\begin{itemize}
\item The purpose of the gamma factor $\Gamma(1+\frac{s_0}{\tau})$ in \eqref{DOZZxi} is to produce the $\Gamma_2(s_0\,|\,\tau)$ term as explained in
footnote \ref{myfoot}. One can look for analytic continuations of $\Gamma(1+\frac{s_0}{\tau})$ other than $\Gamma(1+\frac{s}{\tau}).$ 
Alternatively, one can replace the $\Gamma_2(s+1+\tau\,|\,\tau)$ term in \eqref{minimalxi} with $\Gamma_2(s+\tau\,|\,\tau),$ thereby removing the need to introduce the factor $\Gamma(1+\frac{s_0}{\tau})$ at all. While preserving the DOZZ formula at $s=s_0,$ this replacement however destroys the probabilistic structure of the solution 
as it affects $\mathfrak{M}_1(s\,|\,\alpha, \gamma),$  cf. \eqref{M1}, corresponding to the $\beta_{22}$ factor in \eqref{minprobabxi}
. 
\item One can consider deformations of $\beta_{21}$ and $\beta_{22}$ that preserve the value of the Mellin transform at $s_0$ and are more general than those we considered in Lemmas \ref{deformation21} and \ref{deformation22}. For example, it follows from the  L\'evy-Khinchine representation of $\beta_{22},$ cf. Theorem 7.1 in \cite{Me18},
\begin{equation}
{\bf E}[\beta_{22}^{s_0}(b_0, b_1, b_2)] = {\bf E}\Bigl[\prod_i \beta_{22}^{\kappa_i s_0} (b_0^i, b_1, b_2)\Bigr]
\end{equation}
provided $\kappa_i>0,$ $b_0^i>0,$ and 
\begin{equation}
 (e^{-s_0 t}-1)e^{-b_0 t} = \sum_i (e^{-\kappa_i s_0 t}-1)e^{-b_0^i t}\, \forall t>0
.
\end{equation}
A similar result holds for $\beta_{21}.$ The main limitation of such deformations is that they do not change the small deviation asymptotic of the original random variable. 
\item We observe that the gamma factor $\Gamma(1+\frac{s}{\tau})$ alone produces the desired small deviation asymptotic in
\eqref{smalldeviation}. Thus, if it is indeed present in the ``correct'' solution, then the solution cannot have $\beta_{21}$ factors, cf. \eqref{mellinasymptoticgeneral}. In the minimal solution these factors arise from $\mathfrak{M}_2(s\,|\,\alpha, \gamma),$ cf. \eqref{M2},
\begin{align}
\mathfrak{M}_2(s\,|\,\alpha, \gamma) =  
\frac{\Gamma_2(s-s_0+x_1\,|\,\tau)}{\Gamma_2(x_1-s_0\,|\,\tau)}
\frac{\Gamma_2(1+\tau-s_0\,|\,\tau)}{\Gamma_2(s-s_0+1+\tau\,|\,\tau)}
\frac{\Gamma_2(1+\tau-x_2\,|\,\tau)}{\Gamma_2(s+1+\tau-x_2\,|\,\tau)}
\frac{\Gamma_2(s-s_0+x_3\,|\,\tau)}{\Gamma_2(x_3-s_0\,|\,\tau)},
\end{align}
and the two $\beta_{21}$ factors are identified in \eqref{minprobabxi}. This poses the question of whether one can find a deformation of $\mathfrak{M}_2(s\,|\,\alpha, \gamma)$ that is the Mellin transform of a $\beta_{22}$ instead. This is for example achieved by modifying the last ratio. The expression 
\begin{equation}
\frac{\Gamma_2(s-s_0+x_1\,|\,\tau)}{\Gamma_2(x_1-s_0\,|\,\tau)}
\frac{\Gamma_2(1+\tau-s_0\,|\,\tau)}{\Gamma_2(s-s_0+1+\tau\,|\,\tau)}
\frac{\Gamma_2(1+\tau-x_2\,|\,\tau)}{\Gamma_2(s+1+\tau-x_2\,|\,\tau)}
\frac{\Gamma_2(s-s_0+x_3+1+\tau\,|\,\tau)}{\Gamma_2(x_3-s_0+1+\tau\,|\,\tau)}
\end{equation}
is the Mellin transform of $\beta_{22}\bigl(b_0=x_1-s_0,\, b_1=1+\tau-x_1, \,b_2 = x_3\bigr).$ This however affects the value at $s_0$ and produces
$\frac{\Gamma_2(x_3+1+\tau\,|\,\tau)}{\Gamma_2(x_3-s_0+1+\tau\,|\,\tau)}$ instead of $\frac{\Gamma_2(x_3\,|\,\tau)}{\Gamma_2(x_3-s_0\,|\,\tau)}.$
In the special case of $s_0=0$ the two terms coincide, thereby producing a solution with the desired small deviation asymptotic, cf. Theorem \ref{Minimals0}.
\item
We observe that the conditions $s_0=0$ $(\bar{\alpha}=2Q)$ and $\alpha_k<Q,$ cf. \eqref{bin1} imply $\alpha_k>0$ $\forall k.$ Hence, to be able to compare what our construction in Theorem \ref{Minimals0} predicts for the first moment to the exact first moment of $\rho_g$ one needs to be able to compute the integral
\begin{equation}
\int\limits_{\mathbb{C}} 
g(x)^{1-\frac{\gamma}{4}\overline{\alpha}} |x|^{-\gamma\alpha_1}|1-x|^{-\gamma\alpha_2}
 dx,
\end{equation} 
cf. \eqref{exactfirstmoment}, for $\alpha_k>0$ $\forall k.$ This is an open problem for any metric.
\item
One can consider adding multiples of $s-s_0$ to the arguments of the double gamma factors in the minimal solution. This preserves the DOZZ formula but does not in general 
preserve the probabilistic property of the solution. For example, let us add $s-s_0$ to the terms $\Gamma_2(s+1+\tau-x_2\,|\,\tau)$ and $\Gamma_2(1+\tau-x_2\,|\,\tau)$
in \eqref{minimalxi} resulting in
\begin{align}
\Gamma(1+\frac{s}{\tau}) &
\frac{\Gamma_2(s+1+\tau\,|\,\tau)}{\Gamma_2(1+\tau\,|\,\tau)}
\frac{\Gamma_2(1+\tau-s_0\,|\,\tau)}{\Gamma_2(s-s_0+1+\tau\,|\,\tau)} 
\prod\limits_{i=1}^3 
\frac{\Gamma_2(s-s_0+x_i\,|\,\tau)}{\Gamma_2(x_i-s_0\,|\,\tau)} \times
\nonumber \\
& \times 
\prod\limits_{i=1, 3}
\frac{\Gamma_2(1+\tau-x_i\,|\,\tau)}{\Gamma_2(s+1+\tau-x_i\,|\,\tau)}
\frac{\Gamma_2(s-s_0+1+\tau-x_2\,|\,\tau)}{\Gamma_2(2s-s_0+1+\tau-x_2\,|\,\tau)}
.
\end{align}
One can show that the expression 
\begin{equation}
\Gamma(1+\frac{s}{\tau}) 
 \frac{\Gamma_2(s+1+\tau\,|\,\tau)}{\Gamma_2(1+\tau\,|\,\tau)}
 \prod\limits_{i=1, 3}
\frac{\Gamma_2(s-s_0+x_i\,|\,\tau)}{\Gamma_2(x_i-s_0\,|\,\tau)}
\frac{\Gamma_2(s-s_0+1+\tau-x_2\,|\,\tau)}{\Gamma_2(2s-s_0+1+\tau-x_2\,|\,\tau)}
\end{equation}
is the Mellin transform of a random variable of the type that appears in the 1D GMC measure on the interval, cf. Theorems 8.2 and 8.3 in \cite{Me18}. In particular, it has a lognormal factor. However, the remaining expression 
\begin{equation}
\frac{\Gamma_2(1+\tau-s_0\,|\,\tau)}{\Gamma_2(s-s_0+1+\tau\,|\,\tau)}\frac{\Gamma_2(s-s_0+x_2\,|\,\tau)}{\Gamma_2(x_2-s_0\,|\,\tau)}
 \prod\limits_{i=1, 3} \frac{\Gamma_2(1+\tau-x_i\,|\,\tau)}{\Gamma_2(s+1+\tau-x_i\,|\,\tau)}
\end{equation}
is not the Mellin transform of Barnes beta type random variables. This calculation suggests that the mechanism of a global lognormal factor that works in 1D might not work in 2D. It is an open question whether the DOZZ formula is compatible with a global lognormal factor.

\end{itemize}

The constraints of the problem are very tight and pose significant challenges in constructing solutions that meet all the constraints and have the desired asymptotic behavior. The minimal solution is imposed by the structure of the constraints. The biggest open problem is in finding a degree of freedom that would allow to
control the asymptotic behavior without breaking the constraints. The asymmetric deformations of the minimal solution that we introduced provide several degrees of freedom but do not affect the small deviation asymptotic. In the special case of $s_0=0$ we produced a modification of the minimal solution that satisfies the constraints and the asymptotic.

\section{Conclusions}
We have constructed a family of probability distributions that have the property that their Mellin transform satisfies the DOZZ formula.
The family is parameterized by three deformation parameters. When they are all set to zero, we get the minimal solution, which is symmetric in all insertion points $(\alpha_1, \alpha_2, \alpha_3).$ By varying the deformation parameters, we can produce solutions
that are only symmetric in $(\alpha_1, \alpha_3)$ or not symmetric at all. The probability distributions are constructed from
products of independent Fyodorov-Bouchaud factor (Frechet) and powers of Barnes beta distributions of types $(2,1)$ and $(2,2).$ 
In the special case of all insertion points equal to zero, our distributions degenerate to a product of independent Frechet factors.

Our construction contains a continuous family of solutions that are symmetric in all insertion points $(\alpha_1, \alpha_2, \alpha_3).$ 
We have given a general construction of conformal metrics on the sphere that give rise to $\rho_g$ possessing this symmetry.

We have advanced the theory of Barnes beta distributions by constructing one-parameter deformations of general Barnes beta
distributions of types $(2,1)$ and $(2,2)$ having the property that the deformed Mellin transform has the same value at a given point
as the original Mellin transform. 

Our construction provides the first necessary step towards formulating a conjecture for the law of the GMC on the Riemann sphere with three insertion points.
In support of our construction we note that in addition to satisfying the DOZZ formula, the Mellin transform of our family of distributions is analytic over the same domain as the known domain of analyticity of the Mellin transform of the GMC on the sphere. 
We also note that our construction is capable of reproducing the $(\alpha_1, \alpha_3)$ or $(\alpha_1, \alpha_2, \alpha_3)$ symmetries that are known to hold for the law of GMC corresponding to particular metrics. Our construction is nonetheless not a conjecture for the GMC as it does not match the first moment of the GMC law at zero insertion points $(\alpha_i=0 \,\forall i)$ for any metric or the known small deviation asymptotic of GMC laws corresponding to metrics with everywhere positive curvature. In the special case of $s_0=0$ $(\alpha_1+\alpha_2+\alpha_3=2Q)$ our modified construction satisfies this small deviation asymptotic making it a possible candidate for a conjecture. 

The task of formulating conjectures for specific metrics on the sphere is outside the scope of this paper as it requires going beyond the analytic continuation of the DOZZ formula alone and must await new advances on GMC, notably the understanding of how the GMC law depends on the choice of the conformal metric
and the computation of the first moment of the GMC law as a function of the insertion points for the specific metrics of interest. 
Once this information becomes available, we expect that the metric-specific conjecture will be a refinement of our construction.


\section*{Acknowledgments}
The author gratefully acknowledges that the problem of constructing a probability distribution that satisfies the DOZZ formula 
was posed to the author by Vincent Vargas. The author wishes to thank the referees for many helpful comments.

\appendix
\section{Scaling and Symmetry of GMC on the Sphere}
This section is based on \cite{David}.
Let $X_g(s)$ be defined by \eqref{Xg}, the corresponding $\rho_g$ by \eqref{rhog}, and the metric-independent constant $C_\gamma(\alpha_1, \alpha_2, \alpha_3)$
be as in \eqref{Cconst} so that 
\begin{equation}
\Gamma(s) \, {\bf E}\bigl[ \rho_g(\alpha_1, \alpha_2, \alpha_3, \gamma)^{-s} \bigr]\Big\vert_{s=s_0} =  e^{\frac{s_0^2\gamma^2}{2}\chi_g}\,
C_\gamma(\alpha_1, \alpha_2, \alpha_3),
\end{equation}
Using the scaling relationship, 
\begin{equation}
\chi_{\lambda g} = \chi_g + \frac{1}{2}\log\lambda,\label{lambdascaling}
\end{equation}
that holds for any constant $\lambda>0$ we can work out what $\rho_{\lambda g}(\alpha_1, \alpha_2, \alpha_3) $ is going to be.
Observing the identity in law
\begin{equation}
X_{\lambda g}(x) = X_g(x),
\end{equation}
then
\begin{equation}
M_{\gamma, g}(dx) = \lambda M_{ g}(dx).
\end{equation}
It follows from (\ref{rhog}) that
\begin{align}
\rho_{\lambda g}(\alpha_1, \alpha_2, \alpha_3)  = & \lambda^{1-\frac{\gamma}{4}\bar{\alpha}} \,   e^{\frac{\gamma^2 \log\lambda}{4}}  e^{\frac{\gamma^2 \chi_g}{2}}
 \int_\mathbb{C}
\frac{g(x)^{-\frac{\gamma}{4}\bar{\alpha}}}{|x|^{\gamma\alpha_1} |1-x|^{\gamma\alpha_2}} 
M_{\gamma, g}(dx), \nonumber \\
= &  \lambda^{1-\frac{\gamma}{4}\bar{\alpha}+\frac{\gamma^2}{4}}\, \rho_{g}(\alpha_1, \alpha_2, \alpha_3).\label{gscaling}
\end{align}
This is the fundamental scaling relationship that holds for any background metric.
Then, 
\begin{equation}\label{auxMetric}
\Gamma(s) \, {\bf E}\bigl[ \rho_{\lambda g}(\alpha_1, \alpha_2, \alpha_3)^{-s} \bigr]\Big\vert_{s=s_0} = 
\lambda^{-s_0(1-\frac{\gamma}{4}\bar{\alpha}+\frac{\gamma^2}{4})} \, e^{\frac{s_0^2\gamma^2}{2}\chi_g}\,
C_\gamma(\alpha_1, \alpha_2, \alpha_3).
\end{equation}
On the other hand,
\begin{align}
\Gamma(s) \, {\bf E}\bigl[ \rho_{\lambda g}(\alpha_1, \alpha_2, \alpha_3)^{-s} \bigr]\Big\vert_{s=s_0} = & e^{\frac{s_0^2\gamma^2}{2}\chi_{\lambda g}}\,
C_\gamma(\alpha_1, \alpha_2, \alpha_3), \nonumber \\
= & e^{\frac{s_0^2\gamma^2}{4}\log\lambda}\,
e^{\frac{s_0^2\gamma^2}{2}\chi_{g}}\,
C_\gamma(\alpha_1, \alpha_2, \alpha_3).\label{tmp}
\end{align}
Upon comparing (\ref{auxMetric}) and (\ref{tmp}), we get
\begin{equation}
-s_0(1-\frac{\gamma}{4}\bar{\alpha}+\frac{\gamma^2}{4}) = \frac{s_0^2\gamma^2}{4}.
\end{equation}
which is correct by the definition of $s_0.$

The importance of (\ref{gscaling}) is that it implies the following scaling invariance of the Mellin transform that holds for all $s$ where it is defined,
\begin{equation}
{\bf E}\bigl[ \rho_{\lambda g}(\alpha_1, \alpha_2, \alpha_3)^{-s} \bigr] = \lambda^{s s_0 \frac{\gamma^2}{4}} \,{\bf E}\bigl[ \rho_{g}(\alpha_1, \alpha_2, \alpha_3)^{-s} \bigr]. 
\end{equation}

We end this section with a proof of symmetry of $\rho_g$ under  $\alpha_1\leftrightarrow \alpha_3$ for a class of metrics.
Assume that the background metric $g(x)$ satisfies the property 
\begin{equation}\label{inverseproperty}
g\bigl(\frac{1}{x}\bigr) = |x|^4\,g(x), \,x\in \mathbb{C}.
\end{equation}
Then, 
\begin{equation}
\rho_g(\alpha_3, \alpha_2, \alpha_1, \gamma) = \rho_g(\alpha_1, \alpha_2, \alpha_3, \gamma) 
\end{equation}
in law.

The proof is based on (\ref{Xg}). If (\ref{inverseproperty}) holds, then it is easy to check that the following identities hold,
\begin{align}
{\bf E}\Bigl[X_g\bigl(\frac{1}{x}\bigr)\, X_g\bigl(\frac{1}{y}\bigr)\Bigr] = &{\bf E}[X_g(x)\, X_g(y)] , \\
g\bigl(\frac{1}{x}\bigr) d^2 \bigl(\frac{1}{x}\bigr) = & g(x) d^2 x
.
\end{align}
The result now follows from (\ref{rhog}) by changing variables $x\rightarrow 1/x.$ 

The natural metrics on the Riemann sphere such as 
\begin{align}
g(x) = & |x|_+^{-4}, \\
g(x) = & \frac{1}{(1+|x|^2)^2},
\end{align}
satisfy (\ref{inverseproperty}) so that the corresponding  $\rho_g$ is symmetric under  $\alpha_1\leftrightarrow \alpha_3.$ 

\section{A Construction of Symmetric GMC measures on the Sphere} 
In this section we will give an original construction of a class of metrics $g(x)$ having the property that the corresponding GMC measure $\rho_g(\alpha_1, \alpha_2, \alpha_3, \gamma) $ is symmetric in all $(\alpha_1, \alpha_2, \alpha_3).$
\begin{lemma}
Let $g(x)$ be an arbitrary conformal metric that satisfies \eqref{ginverseproperty}. Define the transformed metric $T[g],$
\begin{equation}
T[g](x) = g(x)  + \frac{g\bigl(\frac{x}{x-1}\bigr)}{|x-1|^4} + g(1-x)
.
\end{equation}
Then, for $x\in \mathbb{C},$
\begin{gather}
T[g](1-x)=T[g](x), \label{symmetry_refl}\\
T[g]\bigl(\frac{1}{x}\bigr) = |x|^4\,T[g](x). \label{symmetry_inv}
\end{gather}
\end{lemma}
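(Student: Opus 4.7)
The plan is to exploit the fact that the three summands in $T[g](x)$ are the pullbacks of $g$ under the three Möbius transformations of the Riemann sphere that fix the unordered set $\{0,1,\infty\}$ but send $0$ to $0$, $1$, and $\infty$ respectively. Indeed, writing $\sigma_1(x)=x$, $\sigma_2(x)=x/(x-1)$, $\sigma_3(x)=1-x$, one checks $|\sigma_2'(x)|^2=1/|x-1|^4$ and $|\sigma_3'(x)|^2=1$, so that
\begin{equation}
T[g](x) \;=\; g(\sigma_1(x))|\sigma_1'(x)|^2 + g(\sigma_2(x))|\sigma_2'(x)|^2 + g(\sigma_3(x))|\sigma_3'(x)|^2.
\end{equation}
The hypothesis \eqref{ginverseproperty} is precisely the conformal pullback law under $x\mapsto 1/x$, so the two claimed symmetries amount to stability of $T[g]$ under this same group action on a richer set of generators. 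I would not introduce this abstract language in the proof; the calculation is short and direct.

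For \eqref{symmetry_refl}, substitute $x\mapsto 1-x$ in the definition. The first and third terms swap trivially, so the only content is to show
\begin{equation}
\frac{g\!\left(\frac{1-x}{-x}\right)}{|-x|^4} \;=\; \frac{g\!\left(\frac{x}{x-1}\right)}{|x-1|^4}.
\end{equation}
Set $y=x/(x-1)$; then $1/y=1-1/x=(x-1)/x=(1-x)/(-x)$, so the left-hand side equals $g(1/y)/|x|^4$. By \eqref{ginverseproperty}, $g(1/y)=|y|^4 g(y)=\bigl(|x|/|x-1|\bigr)^4 g(x/(x-1))$, and the claim follows after dividing by $|x|^4$.

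For \eqref{symmetry_inv}, substitute $x\mapsto 1/x$ term by term. The first term gives $g(1/x)=|x|^4 g(x)$ directly from \eqref{ginverseproperty}. For the middle term, $(1/x)/(1/x-1)=1/(1-x)$ and $|1/x-1|^4=|1-x|^4/|x|^4$, so applying \eqref{ginverseproperty} with $y=1-x$ yields
\begin{equation}
\frac{g(1/(1-x))}{|1/x-1|^4} \;=\; \frac{|1-x|^4 g(1-x)\cdot|x|^4}{|1-x|^4} \;=\; |x|^4\,g(1-x).
\end{equation}
For the third term, $g(1-1/x)$ equals $|x|^4/|x-1|^4\cdot g(x/(x-1))$ by the very identity already verified in the proof of \eqref{symmetry_refl}. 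Summing the three contributions produces $|x|^4$ times the original $T[g](x)$.

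The only potential obstacle is bookkeeping, i.e.\ correctly tracking which of the three terms maps to which after each substitution and applying \eqref{ginverseproperty} with the right choice of argument. This is mitigated by observing at the outset that the set $\{\sigma_1,\sigma_2,\sigma_3\}$ is permuted by precomposition with $x\mapsto 1-x$ and with $x\mapsto 1/x$, which fixes the correspondence between pre- and post-substitution terms and dictates the single application of \eqref{ginverseproperty} needed to reconcile each pair.
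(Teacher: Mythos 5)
Your proof is correct and follows essentially the same route as the paper: a direct term-by-term substitution of $x\mapsto 1-x$ and $x\mapsto 1/x$ into $T[g]$, applying \eqref{ginverseproperty} at the points $x$, $1-x$, and $x/(x-1)$ exactly as the paper does, with your Möbius/pullback remark serving only as organizing bookkeeping. The only item in the paper's proof you omit is the observation that $x=1$ is a removable singularity of $T[g]$ (via $\lim_{z\to\infty}g(z)|z|^4=g(0)$), a minor well-definedness remark rather than a substantive gap.
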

It is elementary to see from \eqref{symmetry_refl} that 
\begin{align}
{\bf E}\Bigl[X_{T[g]}\bigl(1-x\bigr)\, X_{T[g]}\bigl(1-y\bigr)\Bigr] = &{\bf E}[X_{T[g]}(x)\, X_{T[g]}(y)] , \\
T[g]\bigl(1-x\bigr) d^2 \bigl(1-x\bigr) = & T[g](x) d^2 x
\end{align}
so that
\begin{equation}
\rho_g(\alpha_2, \alpha_1, \alpha_3, \gamma) = \rho_g(\alpha_1, \alpha_2, \alpha_3, \gamma) 
\end{equation}
in law. It is shown in Appendix A that \eqref{symmetry_inv} implies 
\begin{equation}
\rho_g(\alpha_3, \alpha_2, \alpha_1, \gamma) = \rho_g(\alpha_1, \alpha_2, \alpha_3, \gamma) 
\end{equation}
so that $\rho_{T[g]}$ is fully symmetric in all insertion points. 

\begin{proof}
First, due to \eqref{ginverseproperty} the original metric satisfies 
\begin{equation}
\lim\limits_{x\rightarrow \infty} g(x)\,|x|^4 = g(0)
\end{equation}
so that the point $x=1$ is a removable singularity of $T[g],$
\begin{align}
\lim\limits_{x\rightarrow 1} \Bigl[\frac{g\bigl(\frac{x}{x-1}\bigr)}{|x-1|^4}\Bigr] = &\Bigl[\lim\limits_{z\rightarrow \infty} g(z)\,|z-1|^4\Bigr], \\
= & g(0).
\end{align}
Next, consider how $T[g]$ transforms under $x\rightarrow 1-x.$
\begin{align}
T[g](1-x)=& g(1-x)  + \frac{g\bigl(\frac{1-x}{-x}\bigr)}{|x|^4} + g(x), \\
= &  g(x) +  \frac{g\bigl(\frac{x}{x-1}\bigr)}{|x-1|^4}   +   g(1-x),
\end{align}
where we applied \eqref{ginverseproperty} at the point $x/(x-1).$
Similarly, consider how $T[g]$ transforms under $x\rightarrow 1/x.$
\begin{align}
T[g](\frac{1}{x})=& g(\frac{1}{x})  + \frac{g\bigl(\frac{\frac{1}{x}}{\frac{1}{x}-1}\bigr)}{|\frac{1}{x}-1|^4} + g(1-\frac{1}{x}), \\
= & |x|^4\, g(x) + |x|^4\, \frac{g\bigl(\frac{1}{1-x}\bigr)}{|x-1|^4}   +  \Big|\frac{x}{x-1}\Big|^4 g(\frac{x}{x-1}), \\
= & |x|^4\Bigl(g(x)  + \frac{g\bigl(\frac{x}{x-1}\bigr)}{|x-1|^4} + g(1-x)\Bigr),
\end{align}
where we applied \eqref{ginverseproperty} at the points $x,$ $x/(x-1),$ and $1-x.$ \qed
\end{proof}

\end{document}